\documentclass[12pt, reqno]{amsart}
\usepackage{amssymb,latexsym,amsmath,amscd,amsthm,graphicx, color}
\usepackage[all]{xy}
\usepackage{pgf,tikz}
\usepackage{mathrsfs}
\usetikzlibrary{arrows}
\usepackage[left=0.6 in, top=0.6 in, right=0.6 in, bottom=0.3 in]{geometry}
﻿
\raggedbottom
\pagestyle{empty}

\definecolor{uuuuuu}{rgb}{0.26666666666666666,0.26666666666666666,0.26666666666666666}
\definecolor{qqwuqq}{rgb}{0.,0.39215686274509803,0.}
\definecolor{zzttqq}{rgb}{0.6,0.2,0.}
\definecolor{xdxdff}{rgb}{0.49019607843137253,0.49019607843137253,1.}
\definecolor{qqqqff}{rgb}{0.,0.,1.}
\definecolor{cqcqcq}{rgb}{0.7529411764705882,0.7529411764705882,0.7529411764705882}
﻿
﻿
%
﻿
﻿
\setlength{\oddsidemargin}{0 in} \setlength{\evensidemargin}{0 in}
\setlength{\textwidth}{6.75 in} \setlength{\topmargin}{-.6 in}
\setlength{\headheight}{.00 in} \setlength{\headsep}{.3 in }
\setlength{\textheight}{10 in} \setlength{\footskip}{0 in}
﻿
﻿
﻿
﻿
\theoremstyle{plain}

\newtheorem{theorem}[subsection]{Theorem}

\newtheorem{lemma}[subsection]{Lemma}

\newtheorem{prop}[subsection]{Proposition}

\theoremstyle{definition}
\newtheorem{deli}[subsection]{Delineation}

\newtheorem{example}[subsection]{Example}

\newtheorem{remark}[subsection]{Remark}

\newtheorem{note}[subsection]{Note}

﻿
﻿
﻿
﻿
﻿
\newcommand{\uu}{\cup}
﻿
\newcommand{\ci}{\subseteq}
\newcommand{\sci}{\subset}
\newcommand{\es}{\emptyset}
\newcommand{\set}[1]{\{#1\}}
﻿
﻿
\newcommand{\ga}{\alpha}
\newcommand{\gb}{\beta}


\newcommand{\gk}{\kappa}

\newcommand{\gn}{\nu}
\newcommand{\go}{\omega}

\newcommand{\gt}{\tau}

﻿
\newcommand{\gG}{\Gamma}
\newcommand{\gD}{\Delta}

﻿
\newcommand{\tit}{\textit}
﻿
\newcommand{\D}[1]{\mathbb{#1}}
﻿
\newcommand{\te}{\text}


\begin{document}
	
	\title{Quantization dimension for a generalized inhomogeneous bi-Lipschitz iterated function system}

	\author{ Shivam Dubey}
	\address{Department of Applied Sciences\\
		Indian Institute of Information Technology Allahabad\\
		Devghat Jhalwa prayagraj}
	
	\email{rss2022509@iiita.ac.in}
	\author{ Mrinal Kanti Roychowdhury}
	\address{School of Mathematical and Statistical Sciences\\
		University of Texas Rio Grande Valley\\
		1201 West University Drive\\
		Edinburg, TX 78539-2999, USA.}
	\email{mrinal.roychowdhury@utrgv.edu}
	\author{Saurabh Verma}
	\address{Department of Applied Sciences\\
		Indian Institute of Information Technology Allahabad\\
		Devghat Jhalwa prayagraj}
	\email{Saurabhverma@iiita.ac.in}

	\subjclass[2010]{60Exx, 94A34, 28A80.}
	\keywords{Condensation measure, bi-Lipschitz IFS, quantization error,  quantization dimension, quantization coefficient, discrete distribution}

	\date{}
	\maketitle
	
	\pagestyle{myheadings}\markboth{S. Dubey, M.K. Roychowdhury, and S. Verma }{Quantization dimension for a generalized inhomogeneous bi-Lipschitz IFS}
	
	\begin{abstract} 
For a given $r\in (0, +\infty)$, the quantization dimension of order $r$, if it exists, denoted by $D_r(\mu)$, of a Borel probability measure $\mu$ on ${\mathbb R}^d$ represents the speed how fast the $n$th quantization error of order $r$ approaches to zero as the number of elements $n$ in an optimal set of $n$-means for $\mu$ tends to infinity. If $D_r(\mu)$ does not exists, we call $\underline D_r(\mu)$ and $\overline D_r(\mu)$, the lower and upper quantization dimensions of $\mu$ of order $r$. In this paper, we estimate the quantization dimension of condensation measures associated with condensation systems $(\{f_i\}_{i=1}^N, (p_i)_{i=0}^N, \nu)$, where the mappings  $f_i$ are bi-Lipschitz and the measure $\nu$ is an image measure of an ergodic measure with bounded distortion supported on a conformal set. In addition, we determine the optimal quantization for an infinite discrete distribution, and give an example which shows that the quantization dimension of a Borel probability measure can be positive with zero quantization coefficient.  
	\end{abstract}
	\section{introduction} 
	Quantization refers to the process of converting a continuous range of values, such as real numbers, into a discrete set with fewer values. It has broad applications in various fields, including signal processing and digital communication (see \cite{GG, GKL, GN, Z}). For some recent results relevant to the work in this paper one can see \cite{DL, GL1, GL2, R1, R2, R3, R4, R5, R6, Z1, Z2, Z3}. Let $\mu$ be a Borel probability measure on $\mathbb R^d$, where $d \in \D N$. For $r \in (0,+\infty)$ and $n \in \mathbb N$, the \tit{$n$th quantization error} of order $r$ for $\mu$ is defined as
	\begin{equation}\label{eq1}
		V_{n,r} := V_{n,r}(\mu) = \inf \left\{ \int \rho(x, A)^r \, \rho\mu(x) : A \subset \mathbb R^d, \, 1\leq \text{card}(A) \leq n \right\},
	\end{equation} 
	where $\rho(x, A)$ represents the distance between the point $x$ and the set $A$, measured with respect to a given norm $\| \cdot \|$ on $\mathbb R^d$.
	﻿For a given $n\in \D N$, if there exists a set $A \subset \mathbb R^d$ with $ \text{card}(A) \le n$ such that the infimum in Equation~\eqref{eq1} is attained, then  $A$ is called an \tit{optimal set of $n$-means} for the probability measure  $\mu$. Let $\mu$ be a Borel probability measure on $\mathbb R^d$, and let $V_{n,r}(\mu)$ be the $n$th quantization error of order $r$. The numbers
	\begin{equation}\label{eq4}
		\underline{D}_r(\mu) := \liminf_{n \to \infty} \frac{r \log n}{-\log V_{n,r}(\mu)} \quad \text{and} \quad \overline{D}_r(\mu) := \limsup_{n \to \infty} \frac{r \log n}{-\log V_{n,r}(\mu)},
	\end{equation} 
	are called the \tit{lower} and \tit{upper quantization dimensions} of $\mu$ of order $r$, respectively. If both values are equal, the common value is referred to as the \tit{quantization dimension} $D_r(\mu)$ of order $r$ of the probability measure $\mu$. The quantization dimension measures the speed at which the quantization error for a measure approaches to zero as $n$ tends to infinity. For any $\gk > 0$, the \tit{ $\gk$-dimensional lower} and \tit{upper quantization coefficients} of order $r$ for a measure $\mu$ are given by
	\[
	\liminf_{n \to \infty} n^{\frac{r}{\gk}} V_{n,r}(\mu) \quad \text{and} \quad \limsup_{n \to \infty} n^{\frac{r}{\gk}} V_{n,r}(\mu),
	\]
	respectively. 
	\begin{remark}\label{rem24}(see \cite{GL2})
		If the $\gk$-dimensional lower and upper quantization coefficients of order $r$ for a measure $\mu$ are finite and positive, then the quantization dimension of $\mu$ of order $r$ exists and is equal to $\gk$.
	\end{remark} 
	﻿Let $\mathcal{I} := \{\mathbb{R}^d; f_1, f_2, \ldots, f_N\}$ be an iterated function system (IFS), where for each $i \in \{1, 2, \ldots, N\}$, there exists $0 < c_i < 1$ such that
	\[
	\rho(f_i(x), f_i(y)) \leq c_i \rho(x, y) \quad \forall x, y \in \mathbb{R}^d.
	\]
	Then, there exists a unique non-empty compact set $K_\es \subset \mathbb{R}^d$ called \tit{invariant set} or \tit{attractor} such that
	\[
	K_\es = \bigcup_{i=1}^N f_i(K_\es).
	\]
	Let $(p_1, p_2, \ldots, p_N)$ be a probability vector. Then, there exists a unique measure $\mu$ called \tit{invariant measure}, which has support the \tit{invariant set} $K_\es$, such that
	\[
	\mu = \sum_{i=1}^N p_i \mu \circ f_i^{-1}.
	\]
	Let $C \subset \mathbb{R}^d$ be a compact set. Then, there exists a unique non-empty compact set $K \subset \mathbb{R}^d$ called \tit{inhomogeneous invariant set} or \tit{inhomogneous attractor} such that
	\begin{equation}\label{eq2}
		K = \bigcup_{i=1}^N f_i(K) \cup C.
	\end{equation}
	Let $(p_0, p_1, p_2, \ldots, p_N)$ be a probability vector, and let $\nu$ be a Borel probability measure supported on $C$. Then, there exists a unique Borel probability measure $\mu$, which has support the \tit{inhomogeneous invariant set} $K$, such that
	\begin{equation}\label{eq3}
		\mu = \sum_{i=1}^N p_i \mu \circ f_i^{-1} + p_0 \nu.
	\end{equation}
	We call $(\{f_i\}_{i=1}^N, (p_i)_{i=0}^N, \nu)$ a \tit{condensation system}. The measure $\mu$ is called the \tit{condensation measure} for the condensation system $(\{f_i\}_{i=1}^N, (p_i)_{i=0}^N, \nu)$. We say that the condensation system satisfies the \tit{strong separation condition} (SSC) if
	\[
	f_i(K) \cap f_j(K) = \emptyset \quad \forall~1 \leq i \neq j \leq N \quad \text{and} \quad f_i(K) \cap C = \emptyset \quad \forall~1 \leq i \leq N.
	\]
	We say that an IFS $\mathcal{I} = \{\mathbb{R}^d; f_1, f_2, \ldots, f_N\}$ satisfies the \tit{open set condition} (OSC) if there exists a bounded nonempty open set $U \subset \mathbb{R}^d$ such that
	\[
	\bigcup_{i=1}^N f_i(U) \subseteq U \quad \text{and} \quad f_i(U) \cap f_j(U) = \emptyset \quad \forall~1 \leq i \neq j \leq N.
	\]
	Moreover, the IFS satisfies the \tit{strong open set condition} (SOSC) if the open set $U$ can be chosen in such a way that it has a non-empty intersection with the attractor $K_\es$ of the IFS $\mathcal{I}$. For some details about an inhomogeneous attractor and a condensation measure, one can see \cite{OS1, OS2, L2}.
	﻿	\begin{remark}
		We know that the classification of an IFS depends on the nature of its associated mappings. For instance, if the associated mappings are bi-Lipschitz, affine, or conformal, then the IFS is referred to as a bi-Lipschitz IFS, affine IFS, or conformal IFS, respectively. It is important to note that a bi-Lipschitz IFS can encompass all well-known classes of fractal sets, including self-similar, self-affine, and self-conformal sets.
		\begin{enumerate}
			\item If $ a_i = b_i $ in Equation \eqref{eq24}, then the IFS $\mathcal{I}$ reduces to a self-similar IFS, and the associated fractal set is referred to as a self-similar set.
			\item Let $ f : \mathbb{R}^d \to \mathbb{R}^d $ be an affine transformation given by $ f(x) = Ax + b $, where $ A $ is an injective linear transformation on $\mathbb{R}^d$. Then,
			\[
			\frac{1}{\|A^{-1}\|} \|x - y\|_2 \le \|Ax - Ay\|_2 = \|f(x) - f(y)\|_2 \le \|A\| \|x - y\|_2.
			\]
			\item Let $ X \subset \mathbb{R}^d $ be a nonempty compact set with $ X = \overline{\text{int}(X)} $, and let $\{(X, d); g_1, g_2, \ldots, g_M\}$ be a conformal IFS. Then, by \cite[Lemma 2.2]{P1}, there exists an open set $ V $ containing $ X $ and a constant $ c \geq 1 $, such that
			\begin{equation}\label{eq2000}
				c^{-1} \|g_{\omega}'\| \|x - y\| \leq \|g_{\omega}(x) - g_{\omega}(y)\| \leq c \|g_{\omega}'\| \|x - y\|,
			\end{equation}	for all $ x, y \in V $ and $ \omega \in \cup_{n \in \mathbb{N}} \{1, 2, \ldots, M\}^n $.
		\end{enumerate}
		From the above points it is clear that dimensional results applicable to bi-Lipschitz IFSs can provide useful estimates for the dimensions of self-similar, self-affine, and self-conformal IFSs. Nevertheless, it should be emphasized that bi-Lipschitz IFSs may not always yield the exact dimensional values for self-affine and self-conformal sets; see, for instance, \cite{F,LM}.
	\end{remark}
	\begin{note}
		Throughout the paper, we will consider the IFS $\mathcal{I}:=(\mathbb{R}^d; \{f_i\}_{i=1}^N, a_i, b_i)$ consisting of bi-Lipschitz mappings such that for all $x, y \in \mathbb{R}^d$,
		\begin{equation}\label{eq24}
			a_i \, \rho(x, y) \leq \rho(f_i(x), f_i(y)) \leq b_i \, \rho(x, y),
		\end{equation}
		where $0 < a_i \leq b_i < 1$.  Let  $\mathcal{I}_1 := (\mathbb{R}^d; \{g_j\}_{j=1}^M)$ be a conformal IFS with self-conformal set $C$, such that $\|g_j'\| \leq r$ for some $0 < r < 1$, where $\|g_j'\|$ represents the sup norm on the derivative map of $g_j$. For details about a conformal IFS and self conformal set, one can see, \cite{P1}. 
		Let $\sum_1 := \{1, 2, \ldots, M\}^{\mathbb{N}}$ be the collection of all sequences, also called infinite strings, in symbols $1, 2, \ldots, M$. For any $n \in \mathbb{N}$, $\sum_1^n:=\set{\go_1\go_2\cdots\go_n : \go_j\in \set{1, 2, \cdots, M} \te{ for all } 1\leq j\leq n}$ represents the collection of all finite strings of length $n$, and $\sum_1^* = \bigcup_{n \in \mathbb{N}\uu \set{0}} \sum_1^n$ is the collection of all finite strings, including the empty string $\emptyset$ of length zero.
		There is a natural connection between the code space and the invariant set given by the coding map $\pi : \sum_1 \to C$, defined by
		\[
		\pi(\omega) = \lim_{n \to \infty} g_{\omega_1 \circ \omega_2 \circ \cdots \circ \omega_n}(x).
		\]
		It is well-known that $\pi(\sum_1) = C$.
		Assume that $\hat\gn$ is a shift-invariant ergodic measure on $\sum$ satisfying the bounded distortion property, i.e., there exists a constant $L\geq 1,$ such that 
		\begin{equation} \label{D1} L^{-1}\hat \gn([\go])\hat\gn ([\gt])\leq \hat\gn([\go\gt])\leq L\hat \gn([\go])\hat\gn ([\gt]),
		\end{equation} 
		for  $\go, \gt \in \sum_1^\ast$, where for a finite string $u:=u_1u_2\cdots u_n\in \sum_1^\ast$ by $[u]$ it is meant the cylinder set $[u_1u_2\cdots u_n]=\set{x=(x_i)_{i=1}^\infty \in \sum : x_i=u_i \te{ for } 1\leq i\leq n}$.  Let $\gn$ be the image measure of $\hat \nu$ under a coding map $\pi$ from $\sum_1$ to $C$, i.e., for any $\go \in \sum_1^\ast$, we have
		\[\nu(C_\go)=\hat \nu (\pi^{-1}(C_\go))=\hat \nu[\go],\]
		for more details see \cite{R5}. 
	\end{note}

	\begin{deli}
		﻿In this paper, we estimate the quantization dimension of a condensation system given by  $(\{f_i\}_{i=1}^N, (p_i)_{i=0}^N, \nu)$ generated by a bi-Lipschitz IFS, where the measure $\nu$ is the image measure of an ergodic measure with bounded distortion on the symbolic space  $\set{1, 2, \cdots, M}^{\D N}$ with support a conformal set $C$. We state and prove Theorem~\ref{thm1}, which is the main result of the paper. In addition, in Section~\ref{sec4}, we determine the optimal sets of $n$-means and the $n$th quantization errors for all $n\in \D N$ for an infinite discrete distribution $\gn$. For this $\gn$, in Theorem~\ref{theo2} we have shown that the quantization dimension $D(\gn)$ exists and equals zero, but the quantization coefficient does not exist. On the other hand, in Example~\ref{exam23}, we have shown that there is an infinite discrete distribution for which the quantization dimension exists, but the quantization coefficient is zero.  This examples shows that the converse of Remark \ref{rem24} does not hold in general.
	\end{deli}  
	
	﻿\begin{remark} \label{Duby1}
		In \cite{PRV}, Priyadarshi et al. estimated the quantization dimension of a  condensation measure $\mu$ associated with a condensation system $(\set{f_i}_{i=1}^N, (p_i)_{i=0}^N, \gn)$, where 
		$(\D R^d;  \set{f_i}_{i=1}^N)$ is a bi-Lipschitz IFS and the measure $\gn$ is generated by a contractive IFS $(\D R^d; g_1, g_2, \cdots, g_M)$ associated with a probability vector $(t_1, t_2, \cdots, t_M)$, i.e., $\gn=\sum_{i=1}^Mt_i\gn\circ g_i^{-1}$. Notice that if $C$ is the attractor of the IFS $(\D R^d; g_1, g_2, \cdots, g_M)$, then for a string $\go:=\go_1\go_2\cdots \go_n\in \sum_1^\ast$, we have 
		\[\gn(C_\go)=\gn(g_{\go_1} g_{\go_2}\cdots g_{\go_n})(C))=t_{\go_1}t_{\go_2}\cdots t_{\go_n}=t_\go.\]
		For any two strings $\go:=\go_1 \go_2\cdots \go_m$ and $\gt:=\gt_1\gt_2\cdots\gt_n$ in $\sum_1^\ast$, by $\go\gt:=\go_1\go_2\cdots\go_m\gt_1\gt_2\cdots\gt_n$, it is meant the concatenation of the two strings $\go$ and $\gt$. Then, we see that for any two strings $\go, \gt\in \sum_1^\ast$, we have 
		\[\gn(C_{\go\gt})=t_{\go\gt}=t_{\go} t_{\gt}=\gn(C_{\go})\gn(C_\gt),\]
		i.e., the measure $\gn$ considered in \cite{PRV} is a special case of an ergodic measure with bounded distortion, which is obtained by taking $L=1$ in the expression \eqref{D1}. Thus, the work in our paper, somehow generalizes the work in \cite{PRV}. 
	\end{remark}
	\section{preliminaries}
	By $\sum = \{1, 2, \ldots, N\}^{\mathbb{N}}$, we denote the collection of all sequences in symbols $1, 2, \ldots, N$. For any $n \in \mathbb{N}$, $\sum^n$ represents the collection of all finite strings of length $n$, and $\sum^* = \bigcup_{n \in \mathbb{N}\uu \set{0}} \sum^n$ is the collection of all finite strings including the empty string $\emptyset$ of length zero. For any string $\omega = \omega_1 \omega_2 \cdots\omega_n \in \sum^n$, we write $|\omega| = n$ to denote the length $n$ of $\omega$, and for any $k \leq n$, we define $\omega_{|_k} = \omega_1 \ldots \omega_k$ as the truncation of $\omega$ to the length $k$. For the juxtaposition of the strings $\omega = \omega_1 \omega_2 \cdots\omega_n \in \sum^n$ and $\tau = \tau_1 \tau_2 \cdots \in \sum$, we write $\omega \tau := \omega * \tau = \omega_1 \omega_2 \cdots\omega_n \gt_1\gt_2\cdots$ and say that $\tau$ is an extension of the string $\omega$, and write $\omega \prec \tau$, if $\omega = \tau_{|_{|\omega|}}$, i.e., if $\omega$ is the prefix of $\tau$. In this way, we define an ordering on $\sum^*$ as follows 
	\[\omega \prec \tau ~\text{whenever}~ |\omega| \le |\tau| \te{ and } \tau_{|_{|\omega|}}=\omega.\] For any two strings $\omega, \tau \in \sum^*,$ we say that $\omega$ and $\tau$ are incomparable if neither $\omega \prec \tau$ nor $\tau \prec \omega.$
	A subset $\Gamma \subset \sum^*$ is called a finite maximal antichain in $\sum^*$ if every sequence in $\sum$ is an extension of some string in $\Gamma$, but no element of $\Gamma$ is comparable to any other element in $\Gamma$. Building upon the work of \cite{PRV}, we fix some definitions. For $\omega \in \sum^*$, 
	\[
	\mathcal{E}_{\omega}(n):= \{\tau \in \sum\nolimits^{|\omega| + n}: \omega \prec \tau\} \quad \text{and} \quad \mathcal{E}_{\omega}^*:= \bigcup_{n \in \mathbb{N}} \mathcal{E}_{\omega}(n).
	\]
 	﻿Let $\Gamma$ be a finite maximal antichain. Then, we define $m_{\Gamma}$ and $M_{\Gamma}$ as the minimum and maximum lengths of the strings in $\Gamma$, respectively. For $\omega \in \sum^{m_{\Gamma}}$, we define
	\[
	\gD_{\gG}(\omega):= \{\tau \in \sum\nolimits^*: \omega \prec \tau, \mathcal{E}_{\tau}^* \cap \Gamma \neq \emptyset\}, \quad \gD_{\gG}^*:= \bigcup_{\omega \in \sum^{m_{\Gamma}}} \gD_{\gG}(\omega).
	\]
	For any $\omega = \omega_1 \omega_2\cdots \omega_n \in \sum^*$, we define $\omega_{-} = \omega_1 \omega_2\cdots\omega_{n-1}$ as the string obtained by removing the last symbol from $\omega$, and $\omega_{-} = \emptyset$ if $n = 1$. For $\omega=\go_1\go_2\cdots\go_n \in \sum^*$, we define $f_{\omega} = f_{\omega_1} \circ f_{\omega_2} \circ \cdots \circ f_{\omega_n}$ if $n\geq 1$, and identify $f_\go$ as the identity mapping $I_{\D R^d}$ on $\D R^d$ if $n=0$. 
	For $n \ge 1,$ by iterating on Equations \eqref{eq2} and \eqref{eq3}, we obtain
	$$K = \bigg(\bigcup_{\omega\in \sum^n} f_{\omega}(K) \bigg) \cup \bigg( \bigcup_{m=0}^{n-1} \bigcup_{\omega \in \sum^m} f_{\omega} (C) \bigg), ~~ \text{and}~~ \mu= \sum_{\omega \in \sum^n} p_\omega \mu \circ f_{\omega}^{-1} + p_0 \sum_{m=0}^{n-1} \sum_{\omega \in \sum^m} p_{\omega} \nu \circ f_{\omega}^{-1}.$$
	﻿	\begin{remark}
		In \cite{R6}, Roychowdhury has shown the existence of the quantization dimension of such a measure by establishing a relationship between the quantization dimension function and the temperature function.
	\end{remark} 
	Now, we recall some lemmas and propositions that will be needed to prove our main result.
	\begin{prop}(see \cite{GL2})\label{prop01}
		\begin{enumerate}
			\item If $0 \le t_1 < \overline{D}_r < t_2,$ then 
			\[ \limsup_{n \to \infty} n V_{n,r}^{\frac{t_1}{r}} = \infty ~ \text{and}~~ \lim_{n \to \infty} n V_{n,r}^{\frac{t_2}{r}}=0.\]
			\item If $0 \le t_1 < \underline{D}_r < t_2,$ then
			\[ \liminf_{n \to \infty} n V_{n,r}^{\frac{t_2}{r}} = 0 ~ \text{and}~~ \lim_{n \to \infty} n V_{n,r}^{\frac{t_1}{r}}=\infty.\]
		\end{enumerate}
	\end{prop}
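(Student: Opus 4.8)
The plan is to reduce all four assertions to one logarithmic identity and then read off the conclusions directly from the definitions of $\underline D_r$ and $\overline D_r$ in \eqref{eq4}. First I would record the standing fact that, since $\mu$ is compactly supported (hence has finite $r$-th moment), the sequence $V_{n,r}(\mu)$ is non-increasing and tends to $0$ as $n\to\infty$; consequently $-\log V_{n,r}\to+\infty$, and in particular $-\log V_{n,r}>0$ for all sufficiently large $n$, so that the ratios appearing in \eqref{eq4} are well defined and the manipulations below are legitimate for large $n$.

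Next, for a fixed exponent $t\ge 0$ and large $n$ put
\[
D_n:=\frac{r\log n}{-\log V_{n,r}},\qquad\text{so that}\qquad \liminf_{n\to\infty}D_n=\underline D_r,\quad \limsup_{n\to\infty}D_n=\overline D_r.
\]
The key observation is the elementary identity
\[
\log\!\big(n\,V_{n,r}^{t/r}\big)=\log n-\frac{t}{r}\big(-\log V_{n,r}\big)=\frac{-\log V_{n,r}}{r}\,(D_n-t),
\]
valid for all large $n$. Since the factor $\tfrac1r\big(-\log V_{n,r}\big)$ is positive and diverges to $+\infty$, the size and sign of $\log\!\big(n\,V_{n,r}^{t/r}\big)$ are controlled entirely by the gap $D_n-t$: any fixed strictly positive (resp. strictly negative) gap that persists forces $n\,V_{n,r}^{t/r}\to+\infty$ (resp. $\to 0$).

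With this in hand the four claims follow by choosing $\varepsilon>0$ appropriately and invoking the meaning of $\limsup$ and $\liminf$. For part (1): if $0\le t_1<\overline D_r$, pick $\varepsilon=\tfrac12(\overline D_r-t_1)$; then $D_n\ge t_1+\varepsilon$ for infinitely many $n$, so along that subsequence $\log\!\big(n\,V_{n,r}^{t_1/r}\big)\to+\infty$, giving $\limsup_n n\,V_{n,r}^{t_1/r}=\infty$; if $\overline D_r<t_2$, pick $\varepsilon=\tfrac12(t_2-\overline D_r)$, so that $D_n\le t_2-\varepsilon$ for all large $n$ and hence $\log\!\big(n\,V_{n,r}^{t_2/r}\big)\to-\infty$, i.e. $\lim_n n\,V_{n,r}^{t_2/r}=0$. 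For part (2) the roles of ``eventually'' and ``along a subsequence'' are interchanged: if $t_1<\underline D_r$, then $D_n\ge t_1+\varepsilon$ for all large $n$ with $\varepsilon=\tfrac12(\underline D_r-t_1)$, giving $\lim_n n\,V_{n,r}^{t_1/r}=\infty$; and if $\underline D_r<t_2$, then $D_n\le t_2-\varepsilon$ for infinitely many $n$ with $\varepsilon=\tfrac12(t_2-\underline D_r)$, giving $\liminf_n n\,V_{n,r}^{t_2/r}=0$.

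The proof is thus almost entirely bookkeeping once the identity is set up; the only genuine analytic input is the standard fact that $V_{n,r}\to 0$, which is what makes the multiplier $-\log V_{n,r}$ diverge and thereby amplifies a constant gap $D_n-t$ into divergence of $\log\!\big(n\,V_{n,r}^{t/r}\big)$. The single point requiring care is to match, in each of the four cases, whether the defining $\liminf$ or $\limsup$ yields an inequality that holds eventually or only along a subsequence, and to pair this correctly with a statement about $\lim$ versus $\limsup$ or $\liminf$; the four cases above exhibit exactly the pairing dictated by the definitions, so nothing more is needed.
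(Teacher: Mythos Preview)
Your argument is correct and is essentially the standard proof (as given in \cite{GL2}); the paper does not supply its own proof of this proposition but merely cites \cite{GL2}. One small remark: you justify $V_{n,r}\to 0$ via compact support, which is fine in the setting of this paper, but the proposition as stated in \cite{GL2} only needs the weaker hypothesis $\int\|x\|^r\,d\mu<\infty$ (and that $\mu$ is not finitely supported, so that $V_{n,r}>0$ for all $n$).
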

	\begin{lemma}(see \cite[Lemma ~ 3.2]{R6})\label{lemma1000}
		Let $\nu$ be the image measure on the self-conformal set $C$ of the shift-invariant ergodic measure $\hat{\nu}$ on the coding space under the coding map. Then the quantization dimension of order $r$ of $\nu$ is $k_r$, where $k_r$ is uniquely determined by the relation 
		\begin{equation}\label{eq97}
			\lim_{n \to \infty} \frac{1}{n} \log \sum_{|\omega|=n} (\|g_\omega'\|^r \hat{\nu}[\omega])^{\frac{k_r}{r+k_r}}=0.
		\end{equation}
	\end{lemma}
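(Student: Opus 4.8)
\medskip
\noindent\textbf{Proof proposal.} The plan is to adapt the Graf--Luschgy scheme for the quantization of self-similar measures to the present situation, the only genuinely new point being that the bounded distortion hypotheses \eqref{D1} for $\hat\gn$ and \eqref{eq2000} for the conformal maps are exactly what make the scheme go through. First I would introduce, for $s\in[0,1]$ and $n\in\D N$, the sum
\[
\gF_n(s):=\sum_{|\go|=n}\big(\|g_\go'\|^r\,\hat\gn[\go]\big)^{s},
\]
and record the uniform two-sided comparisons $\|g_{\go\gt}'\|\asymp\|g_\go'\|\,\|g_\gt'\|$ (chain rule together with \eqref{eq2000}) and $\hat\gn[\go\gt]\asymp\hat\gn[\go]\,\hat\gn[\gt]$ (from \eqref{D1}), with constants uniform in the strings. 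These give $\gF_{m+n}(s)\asymp\gF_m(s)\,\gF_n(s)$, so that $P(s):=\lim_{n\to\infty}\tfrac1n\log\gF_n(s)$ exists by Fekete's lemma. Since each factor $\|g_\go'\|^r\hat\gn[\go]$ lies in $(0,1)$, $\gF_n$ is non-increasing in $s$; $P$ is convex, hence continuous, with $P(0)=\log M>0$ and, writing $\gl:=\max_{1\le j\le M}\|g_j'\|<1$ so that $\|g_\go'\|\le\gl^{|\go|}$ and $\sum_{|\go|=n}\hat\gn[\go]=1$, with $P(1)\le r\log\gl<0$. Hence there is a unique $s_r\in(0,1)$ with $P(s_r)=0$; setting $k_r:=\tfrac{rs_r}{1-s_r}$, equivalently $s_r=\tfrac{k_r}{r+k_r}$, the equation $P(s_r)=0$ is precisely \eqref{eq97} and $k_r$ is the unique number satisfying it. It then remains to show $D_r(\gn)=k_r$.

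For the upper bound $\overline D_r(\gn)\le k_r$, for small $\gd>0$ I would let $\gG(\gd)$ be the finite maximal antichain of strings $\go$ minimal subject to $\|g_\go'\|^r\hat\gn[\go]<\gd$; bounded distortion gives $\|g_\go'\|^r\hat\gn[\go]\asymp\gd$ for all $\go\in\gG(\gd)$. Comparing $\gG(\gd)$ with the generations between its minimal and maximal string lengths and using $P(s_r)=0$, one gets $\gd^{-s_r+\vep}\le n_\gd:=\#\gG(\gd)\le\gd^{-s_r-\vep}$ for every $\vep>0$ and all small $\gd$. Picking one point $x_\go\in C_\go$ for each $\go\in\gG(\gd)$ yields a set $\ga$ with $\#\ga\le n_\gd$, and since the $C_\go$ tile $C$ up to a $\gn$-null set,
\begin{align*}
V_{n_\gd,r}(\gn)&\le\sum_{\go\in\gG(\gd)}\int_{C_\go}\rho(x,\ga)^r\,d\gn\le\sum_{\go\in\gG(\gd)}(\operatorname{diam}C_\go)^r\,\hat\gn[\go]\\
&\lesssim\sum_{\go\in\gG(\gd)}\|g_\go'\|^r\,\hat\gn[\go]\asymp n_\gd\,\gd.
\end{align*}
As $\gd\le n_\gd^{-1/(s_r+\vep)}$, this gives $V_{n_\gd,r}(\gn)\lesssim n_\gd^{\,1-1/(s_r+\vep)}$; a routine interpolation, using that $n\mapsto V_{n,r}$ is non-increasing and that $n_\gd$ grows at a bounded geometric rate as $\gd\downarrow0$, extends the estimate to all $n$, whence $\overline D_r(\gn)\le\tfrac{r(s_r+\vep)}{1-s_r-\vep}$; letting $\vep\to0$ gives $\overline D_r(\gn)\le k_r$.

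For the lower bound $\underline D_r(\gn)\ge k_r$ I would use the open set condition that is part of the conformal IFS hypothesis: there is a bounded open set $U$ with $U\cap C\ne\es$ whose images $g_\go(U)$ are pairwise disjoint along antichains. From \eqref{eq2000} and \eqref{D1} one obtains, for each $\go\in\gG(\gd)$, a ball $B_\go$ of radius comparable to $\|g_\go'\|$ with $\gn(B_\go)\gtrsim\hat\gn[\go]$, in such a way that the concentric half-radius balls $B_\go'$ have overlap bounded by a constant $Q$ depending only on the IFS; a standard selection then extracts a pairwise disjoint subfamily of cardinality at least $n_\gd/Q$. Given any $A\sci\D R^d$ with $\#A\le n:=\lfloor n_\gd/(2Q)\rfloor$, at least $n$ of these disjoint balls contain no point of $A$, and on the part of $C_\go$ lying in such a $B_\go'$ the distance to $A$ is $\gtrsim\|g_\go'\|$ while $\gn$ assigns mass $\gtrsim\hat\gn[\go]$; summing, $\int\rho(x,A)^r\,d\gn\gtrsim\sum_\go\|g_\go'\|^r\hat\gn[\go]\gtrsim n\gd$. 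Taking the infimum over such $A$ and using $\gd\gtrsim n^{-1/(s_r-\vep)}$ gives $V_{n,r}(\gn)\gtrsim n^{\,1-1/(s_r-\vep)}$, so $\underline D_r(\gn)\ge\tfrac{r(s_r-\vep)}{1-s_r+\vep}$; letting $\vep\to0$ gives $\underline D_r(\gn)\ge k_r$. Combining the two bounds, $D_r(\gn)=k_r$.

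I expect the main obstacle to be the lower bound: extracting a fixed positive proportion of genuinely disjoint conformal pieces from the open set condition, each carrying a definite share of the $\gn$-mass, which rests on the comparisons $\gn(B_\go)\asymp\hat\gn[\go]$ and $\operatorname{rad}B_\go\asymp\|g_\go'\|$ derived from \eqref{eq2000} and \eqref{D1}, together with a bounded-overlap selection. A secondary but unavoidable nuisance is that $n_\gd\asymp\gd^{-s_r}$ only holds up to subexponential corrections $\gd^{\pm\vep}$ caused by the distortion constant $L$ in \eqref{D1}, so the whole argument must be carried out with $\vep>0$ present and the limit $\vep\to0$ taken only at the end; this is also why the method yields the quantization \emph{dimension} but not, in general, finiteness and positivity of the quantization coefficient.
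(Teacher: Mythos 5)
Your proposal is correct in outline, and it is essentially the argument behind the cited source: the paper itself gives no proof of this lemma, importing it from \cite{R6} (whose Lemmas 3.3--3.4 appear here as Lemma \ref{lemma2.4} and are exactly your quasi-multiplicativity/Fekete pressure bounds), which in turn follows the Graf--Luschgy and Lindsay--Mauldin scheme you reconstruct. Your two flagged caveats (the SOSC-based mass-and-separation estimate for the lower bound, and carrying the $\vep$-corrections from the distortion constant $L$) are precisely the points handled in \cite{R6, LM}, so there is no new idea missing.
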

	\begin{lemma}(see \cite[Lemma~ 3.3 \te{ and } Lemma~3.4]{R6}) \label{lemma2.4}
		Let $ 0 < r < +\infty $ and $ k_r $ be the number defined in Lemma \ref{lemma1000}. Then, for any $ n \ge 1 $, we have
		\begin{equation} \label{eq96}
			e^{nh(t)} (c^r L)^{-t} \le \sum_{|w|=n} \left( \|g'_{\omega}\|^r \hat{\nu}[\omega] \right)^t \le e^{nh(t)} (c^r L)^t,
		\end{equation}
		where $ h(t) = \lim_{n \to \infty} \frac{1}{n} \log \sum_{|w|=n} \left( \|g'_{\omega}\|^r \hat{\nu}[\omega] \right)^t $. Moreover, for $ t = \frac{k_r}{r + k_r} $,
		\[
		(c^{r} L)^{-k_r / (r + k_r)} \le \sum_{|w|=n} \left( \|g'_{\omega}\|^r \hat{\nu}[\omega] \right)^{k_r / (r + k_r)} \le (c^{r} L)^{k_r / (r + k_r)}.
		\]
	\end{lemma}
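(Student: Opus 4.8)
The plan is to reduce the whole statement to an almost-multiplicativity property of $S_n(t):=\sum_{|\omega|=n}\bigl(\|g'_\omega\|^r\hat\nu[\omega]\bigr)^t$ and then to apply Fekete's subadditive lemma in each direction. The first step is a bounded-distortion estimate for the conformal IFS $\mathcal I_1$: writing $g_{\omega\tau}=g_\omega\circ g_\tau$ and composing the two-sided estimate \eqref{eq2000} applied to $g_\omega$ and to $g_\tau$ (evaluated on the self-conformal set $C$, which $g_\tau$ maps into itself), one obtains, after if necessary replacing the constant $c$ of \eqref{eq2000} by a fixed power of itself,
\[
c^{-1}\,\|g'_\omega\|\,\|g'_\tau\|\;\le\;\|g'_{\omega\tau}\|\;\le\;c\,\|g'_\omega\|\,\|g'_\tau\|\qquad\text{for all }\omega,\tau .
\]
Raising this to the power $r$ and multiplying it by the bounded-distortion hypothesis \eqref{D1} for $\hat\nu$ shows that $\phi(\omega):=\|g'_\omega\|^r\hat\nu[\omega]$ is simultaneously sub- and super-multiplicative up to the constant $c^rL$, i.e. $(c^rL)^{-1}\phi(\omega)\phi(\tau)\le\phi(\omega\tau)\le(c^rL)\,\phi(\omega)\phi(\tau)$.

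Next, since every string of length $m+n$ factors uniquely as $\omega\tau$ with $|\omega|=m$ and $|\tau|=n$, raising the last inequality to the power $t>0$ (the function $x\mapsto x^t$ being nondecreasing on $[0,\infty)$) and summing gives $(c^rL)^{-t}S_m(t)S_n(t)\le S_{m+n}(t)\le(c^rL)^{t}S_m(t)S_n(t)$. Since $0<S_n(t)<\infty$ for every $n$ (a finite sum of nonnegative terms not all zero, because $\sum_{|\omega|=n}\hat\nu[\omega]=1$), the sequences $a_n:=\log\bigl((c^rL)^tS_n(t)\bigr)$ and $b_n:=\log\bigl((c^rL)^{-t}S_n(t)\bigr)$ are subadditive and superadditive respectively. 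Fekete's lemma gives $\lim_{n}a_n/n=\inf_{n}a_n/n$ and $\lim_{n}b_n/n=\sup_{n}b_n/n$; since $a_n/n-b_n/n\to0$ the two limits agree and are finite, so $h(t):=\lim_{n}\frac1n\log S_n(t)$ exists. Feeding back the inequalities $h(t)\le\frac1n a_n$ and $h(t)\ge\frac1n b_n$, valid for every $n\ge1$, and rearranging yields exactly $e^{nh(t)}(c^rL)^{-t}\le S_n(t)\le e^{nh(t)}(c^rL)^{t}$.

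For the ``moreover'' clause one specializes to $t=\frac{k_r}{r+k_r}$: by Lemma~\ref{lemma1000} the exponent $k_r$ is characterized by \eqref{eq97}, which is precisely $h\!\left(\frac{k_r}{r+k_r}\right)=0$, so $e^{nh(t)}=1$ and the double inequality collapses to $(c^rL)^{-k_r/(r+k_r)}\le S_n(t)\le(c^rL)^{k_r/(r+k_r)}$ for all $n\ge1$. I expect the only delicate point to be the first step — propagating the conformal and the $\hat\nu$ distortion constants so that $\phi$ becomes almost-multiplicative with the \emph{specific} constant $c^rL$ (plus the harmless check that $S_n(t)$ is positive and finite, so that Fekete's lemma applies); everything after that is a mechanical consequence of the subadditive lemma and of the defining relation for $k_r$.
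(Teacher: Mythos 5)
Your proposal is correct, and since the paper offers no proof of this lemma (it simply quotes \cite[Lemmas~3.3--3.4]{R6}), your route --- quasi-multiplicativity of $\omega\mapsto\|g'_\omega\|^r\hat\nu[\omega]$ with constant $c^rL$, summation over the unique factorization of strings, Fekete's lemma in both directions, and then $h\bigl(\tfrac{k_r}{r+k_r}\bigr)=0$ from \eqref{eq97} for the ``moreover'' clause --- is exactly the standard argument of the cited source. Your only hedge (replacing $c$ by a power of itself) is in fact unnecessary: letting $y\to x$ in \eqref{eq2000} gives the pointwise bounds $c^{-1}\|g'_\omega\|\le|g'_\omega(x)|\le c\|g'_\omega\|$ on $V$, so the chain rule yields $c^{-1}\|g'_\omega\|\,\|g'_\tau\|\le\|g'_{\omega\tau}\|\le\|g'_\omega\|\,\|g'_\tau\|$ directly, and the constant $c^rL$ in the statement is obtained exactly.
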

	\begin{lemma}(see \cite{Z3})
		Let $\Gamma$ be a finite maximal antichain. Then,
		\[ K = \Big(\bigcup_{\omega \in \Gamma} f_{\omega}(K)\bigg) \cup \bigg( \bigcup_{\omega \in \gD_{\Gamma}^*} f_{\omega}(C)\bigg) \cup \bigg( \bigcup_{n=0}^{l_\Gamma -1} \bigcup_{\omega \in \sum^n} f_{\omega}(C)\Big).\]
	\end{lemma}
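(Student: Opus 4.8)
Here is my plan for proving the last lemma (the decomposition of $K$ along a finite maximal antichain $\Gamma$).

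The proof splits into the two inclusions, and only one of them has content. For $\supseteq$ it is enough to observe that for every $\omega\in\sum^*$ one has $f_\omega(K)\subseteq K$ (iterate the defining relation $K\supseteq\bigcup_{i=1}^N f_i(K)$) and, since $C\subseteq K$, also $f_\omega(C)\subseteq f_\omega(K)\subseteq K$; hence each of the three sets on the right-hand side sits inside $K$. So the real task is the reverse inclusion.

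For $K\subseteq(\text{right-hand side})$ the plan is to start from the finite-level identity already recorded in the preliminaries,
\[
K=\Big(\bigcup_{\omega\in\sum^{n}}f_\omega(K)\Big)\cup\Big(\bigcup_{m=0}^{n-1}\bigcup_{\omega\in\sum^{m}}f_\omega(C)\Big),
\]
applied for the single fixed value $n=M_\Gamma$, and to redistribute each of its pieces into exactly one of the three terms on the right. The combinatorial input is the following elementary consequence of $\Gamma$ being a finite maximal antichain: (a) every $\omega\in\sum^*$ with $|\omega|\ge M_\Gamma$ has a unique prefix in $\Gamma$; and (b) every $\omega\in\sum^*$ with $|\omega|\ge m_\Gamma$ either has a prefix in $\Gamma$ or is a proper prefix of some element of $\Gamma$ — and in the latter case $\omega\in\gD_\Gamma^*$ by the definition of $\gD_\Gamma^*$ (since then $\mathcal{E}_\omega^*\cap\Gamma\ne\emptyset$). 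Both are proved by fixing an infinite extension $\omega\xi\in\sum$, invoking maximality to get its prefix $\gamma\in\Gamma$, and comparing $|\gamma|$ with $|\omega|$; uniqueness in (a) uses that two prefixes of $\omega$ lying in $\Gamma$ would be comparable.

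The redistribution is then bookkeeping. For $\omega\in\sum^{M_\Gamma}$, write $\omega=\gamma*\omega'$ with $\gamma$ the prefix from (a); since $f_{\omega'}(K)\subseteq K$ we get $f_\omega(K)\subseteq f_\gamma(K)$, so $\bigcup_{\omega\in\sum^{M_\Gamma}}f_\omega(K)\subseteq\bigcup_{\gamma\in\Gamma}f_\gamma(K)$. For the condensation terms split the range of $m$ at $m_\Gamma$: the block $0\le m\le m_\Gamma-1$ is literally the third term $\bigcup_{m=0}^{l_\Gamma-1}\bigcup_{\omega\in\sum^{m}}f_\omega(C)$, where $l_\Gamma=m_\Gamma$ is the minimal length of a string in $\Gamma$; and for $m_\Gamma\le m\le M_\Gamma-1$ fact (b) applies to each $\omega\in\sum^{m}$: if $\omega$ has a prefix $\gamma\in\Gamma$ then, writing $\omega=\gamma*\omega'$ and using $f_{\omega'}(C)\subseteq K$, we get $f_\omega(C)\subseteq f_\gamma(K)$ (first term); otherwise $\omega\in\gD_\Gamma^*$ and $f_\omega(C)$ falls into the second term. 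Combining with the inclusion $\supseteq$ from the first paragraph yields equality. The argument uses no separation hypothesis — this is a purely set-theoretic decomposition — and taking $\Gamma=\sum^{n}$ (for which $\gD_\Gamma^*=\emptyset$) recovers the finite-level formula above. The only place requiring care is unravelling the definition of $\gD_\Gamma^*$: one must check that a string $\omega$ with $|\omega|\ge m_\Gamma$ and no prefix in $\Gamma$ is genuinely a proper prefix of some $\gamma\in\Gamma$, and (for the clean statement) that conversely no element of $\gD_\Gamma^*$ has a prefix in $\Gamma$; both follow from the same length comparison as in (a)–(b), so I expect no genuine obstacle.
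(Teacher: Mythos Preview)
The paper does not prove this lemma at all; it merely cites \cite{Z3}. Your argument is correct and complete: the easy inclusion $\supseteq$ follows from $C\subseteq K$ and the invariance $f_i(K)\subseteq K$, and for $\subseteq$ your redistribution of the level-$M_\Gamma$ identity is exactly the right mechanism. The dichotomy you call (b) --- that any $\omega$ with $|\omega|\ge m_\Gamma$ either has a prefix in $\Gamma$ or lies in $\gD_\Gamma^*$ --- is precisely what the definition of $\gD_\Gamma^*$ is designed to capture, and your verification via an infinite extension and a length comparison is the standard one. Your identification $l_\Gamma=m_\Gamma$ is correct; the paper's notation is simply inconsistent on this point (it defines $m_\Gamma$ in the preliminaries but writes $l_\Gamma$ in the lemma and in \eqref{eq98}). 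The remark that no element of $\gD_\Gamma^*$ has a prefix in $\Gamma$ is true (two comparable elements of $\Gamma$ would violate the antichain condition) but, as you note, it is not needed for the equality itself --- only for seeing that the three pieces are in some sense non-redundant.
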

	\section{main result} \label{sec3}
	In this section, we state and prove the following theorem, which is the main result of the paper. 
	\begin{theorem}\label{thm1}
		Let $r$ be a positive real number, and let $\mu$ be the condensation measure associated with the condensation system $(\{f_i\}_{i=1}^N, (p_i)_{i=0}^N, \nu)$, where $C$ is the conformal set generated by a conformal IFS $\mathcal{I}_1 = (\mathbb{R}^d; \{g_j\}_{j=1}^M)$, and $\gn$ is the image measure of an ergodic measure with bounded distortion on the code space $\set{1, 2, \cdots, M}^{\D N}$ with support $C$. Assume that the condensation system satisfies the SSC. Then,
		\[
		\max\{d_r, D_r(\nu)\} \le \underline{D}_r(\mu),
		\]
		where $d_r$ is uniquely determined by the relation	$\sum_{i=1}^N \left(p_i a_i^r\right)^{\frac{d_r}{r + d_r}}=1.$ Furthermore, if the IFS $\mathcal{I}_1$ satisfies the SSC, then
		\[
		\max\{d_r, D_r(\nu)\} \le \underline{D}_r(\mu) \le \overline{D}_r(\mu) \le \max\{l_r, D_r(\nu)\},
		\]
		where $l_r$ is determined by the relation
		$\sum_{i=1}^N \left(p_i b_i^r\right)^{\frac{l_r}{r + l_r}}=1.$	
	\end{theorem}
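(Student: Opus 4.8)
The plan is to prove the two chains of inequalities separately, each time reducing to a decay statement about $V_{n,r}(\mu)$. Two structural facts will be used repeatedly. First, pushing an optimal $k$-point set forward (resp.\ backward) along $f_\omega$ and using \eqref{eq24}, one gets for every string $\omega$
\[
a_\omega^r\, V_{k,r}(\mu)\ \le\ V_{k,r}(\mu\circ f_\omega^{-1})\ \le\ b_\omega^r\, V_{k,r}(\mu),
\qquad a_\omega:=\textstyle\prod_{j}a_{\omega_j},\quad b_\omega:=\prod_{j}b_{\omega_j}.
\]
Second, $p_\omega a_\omega^r$ and $p_\omega b_\omega^r$ are multiplicative in $\omega$ and $\sum_{i=1}^{N}(p_ia_i^r)^{d_r/(r+d_r)}=1=\sum_{i=1}^{N}(p_ib_i^r)^{l_r/(r+l_r)}$; by a node-to-children replacement this gives $\sum_{\omega\in\Gamma}(p_\omega a_\omega^r)^{d_r/(r+d_r)}=\sum_{\omega\in\Gamma}(p_\omega b_\omega^r)^{l_r/(r+l_r)}=1$ for \emph{every} finite maximal antichain $\Gamma$, so the cardinalities of the stopping-time antichains below are fixed to within constants.

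First I would dispose of the easy half of the lower bound: $\mu\ge p_0\nu$ gives $V_{n,r}(\mu)\ge p_0 V_{n,r}(\nu)$, whence $\underline D_r(\mu)\ge\underline D_r(\nu)=D_r(\nu)$ by Lemma~\ref{lemma1000}. For $\underline D_r(\mu)\ge d_r$, fix $s<d_r$, so that $\gamma:=\sum_{i=1}^{N}(p_ia_i^r)^{s/(r+s)}>1$; it is enough to show $V_{n,r}(\mu)\ge c\,n^{-r/s}$ for large $n$, since by \eqref{eq4} this forces $\underline D_r(\mu)\ge s$ (cf.\ Proposition~\ref{prop01}). Starting from $\mu\ge\sum_{i=1}^{N}p_i\mu\circ f_i^{-1}$ and the SSC (which separates $f_1(K),\dots,f_N(K)$ by some $\delta_1>0$), I would localise an arbitrary $n$-point set to the $\tfrac{\delta_1}{2}$-neighbourhoods of the $f_i(K)$ and, using that optimal codebooks of the compactly supported $\mu$ concentrate on $\mathrm{supp}\,\mu$, upgrade this to the constant-free recursion
\[
V_{n,r}(\mu)\ \ge\ (1-o(1))\,\min\Big\{\textstyle\sum_{i=1}^{N} p_i a_i^r\, V_{n_i,r}(\mu)\ :\ n_i\ge 1,\ \sum_i n_i\le n\Big\}
\]
valid for large $n$. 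Since $\min\{\sum_i p_ia_i^r\, n_i^{-r/s}:\sum_i n_i=n\}=\gamma^{1+r/s}n^{-r/s}$ and $\gamma^{1+r/s}>1$, an induction on $n$ propagates $V_{n,r}(\mu)\ge c\,n^{-r/s}$; letting $s\uparrow d_r$ closes this half.

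For the upper bound (now using the SSC of $\mathcal I_1$), fix a small $\delta>0$, put $D:=\max\{l_r,D_r(\nu)\}+\delta$, and construct, for each large $n$, an $n$-point set with error $\le C\,n^{-r/D}(\log n)^{C'}$; this gives $\overline D_r(\mu)\le D$ by \eqref{eq4}, and then $\delta\downarrow 0$. I would pick $\epsilon=\epsilon(n)$ so that the antichain $\Gamma_\epsilon:=\{\omega:\ p_\omega b_\omega^r<\epsilon\le p_{\omega_-}b_{\omega_-}^r\}$ has $|\Gamma_\epsilon|\asymp n$ (forced by $\sum_{\omega\in\Gamma_\epsilon}(p_\omega b_\omega^r)^{l_r/(r+l_r)}=1$, i.e.\ $|\Gamma_\epsilon|\asymp\epsilon^{-l_r/(r+l_r)}$), and use the partition lemma of \cite{Z3} to write $\mu=\sum_{\omega\in\Gamma_\epsilon}p_\omega\mu\circ f_\omega^{-1}+p_0\sum_{\omega\in\mathcal C_\epsilon}p_\omega\nu\circ f_\omega^{-1}$, where every conformal index $\omega\in\mathcal C_\epsilon$ is a proper prefix of some element of $\Gamma_\epsilon$ and so has length $O(\log n)$. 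Placing one code point inside each $f_\omega(K)$, $\omega\in\Gamma_\epsilon$, costs $|\Gamma_\epsilon|$ points and contributes at most $\mathrm{diam}(K)^r\sum_{\omega\in\Gamma_\epsilon}p_\omega b_\omega^r\le\mathrm{diam}(K)^r\epsilon^{\,r/(r+l_r)}\lesssim n^{-r/l_r}\le n^{-r/D}$. On the conformal cells I would use near-optimal quantizers for $\nu\circ f_\omega^{-1}$, whose errors are $\le b_\omega^r V_{k_\omega,r}(\nu)$ with $V_{k,r}(\nu)\lesssim k^{-r/D}$ — here Lemmas~\ref{lemma1000} and~\ref{lemma2.4} and $D>D_r(\nu)$ are used, and the SSC of $\mathcal I_1$ enters to make the sharp estimate for $\nu$ available. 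Since $\mathcal C_\epsilon$ may contain many more cells than the remaining $\asymp n$ code points, I would feed points only to cells of weight $p_0p_\omega b_\omega^r\ge\theta$ with $\theta\asymp(\log n/n)^{(r+D)/D}$, spread the points proportionally to $(p_0p_\omega b_\omega^r)^{D/(r+D)}$, and estimate the discarded cells trivially; using $\sum_{\omega\in\mathcal C_\epsilon}(p_\omega b_\omega^r)^{D/(r+D)}\le M_{\Gamma_\epsilon}=O(\log n)$, both the fed and the discarded cells contribute $O(n^{-r/D}(\log n)^{1+r/D})$. Summing the three pieces yields the bound and hence $\overline D_r(\mu)\le\max\{l_r,D_r(\nu)\}$.

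The main obstacle I anticipate is obtaining the \emph{sharp} exponent $d_r$ in the lower bound. Running the self-referential inequality through the induction with the separation constant left in loses a multiplicative factor per level and only gives $\underline D_r(\mu)\ge s_0$ for some $s_0<d_r$; what rescues the sharp value is precisely the constant-free recursion, which rests on the (standard but technical) concentration of optimal codebooks near $\mathrm{supp}\,\mu$ together with the SSC, and making this quantitative is the delicate point. A lesser nuisance on the upper side is that $\Delta_{\Gamma_\epsilon}^{*}$ can contain more cells than the available code points, which forces the threshold/tail split and introduces logarithmic factors; these do not affect $\overline D_r(\mu)$ but would obstruct any statement about the quantization coefficient, consistently with the examples announced in the abstract.
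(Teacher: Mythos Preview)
Your proposal is correct but the route differs from the paper's on both halves.

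For the lower bound the paper proves nothing new: Remark~\ref{rem30} simply quotes \cite[Theorem~1.1]{PRV}, which already gives $\max\{d_r,\underline D_r(\nu)\}\le\underline D_r(\mu)$ for any compactly supported $\nu$, and then observes that $D_r(\nu)$ exists by \cite[Theorem~3.1]{R6}. The ``main obstacle'' you flag---making the self-referential recursion constant-free via concentration of optimal codebooks under the SSC---is precisely the content of the cited result, so you are in effect reproving \cite{PRV} where the paper just cites it. For the upper bound the paper's construction is genuinely different from yours. Rather than allocating near-optimal $k_\omega$-quantizers of $\nu$ to each conformal cell $f_\omega(C)$ and balancing a point budget by a threshold/tail split, the paper refines each such cell by a \emph{second} antichain $\Gamma_n^r(\omega)$ in the conformal code space $\Sigma_1^*$, cut at the matched scale $\xi_r^n/(p_\omega b_\omega^r)$, and places exactly one point in every doubly-indexed cell $f_\omega(g_\tau(C))$ (Lemma~\ref{lemma99}). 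The total error is then $\le d^*c\,\xi_r^n\psi_{n,r}$, and $\psi_{n,r}$ is controlled directly by the pressure estimate of Lemma~\ref{lemma2.4} (Proposition~\ref{prop00}), giving $\psi_{n,r}V_{\psi_{n,r},r}^{s/r}(\mu)\le C$ for every $s>\max\{l_r,k_r\}$ with no logarithmic loss. The bounded-distortion hypothesis and the SSC of $\mathcal I_1$ enter here to identify $\mu(f_\omega(g_\tau(C)))$ with $p_0 p_\omega\hat\nu[\tau]$ up to a constant. Your scheme is more modular---it uses only that $D_r(\nu)$ exists, hence would apply to more general $\nu$---but pays for this with the allocation machinery and the polylog factors (incidentally, since you take $D>l_r$ one has $\sum_{\omega\in\mathcal C_\epsilon}(p_\omega b_\omega^r)^{D/(r+D)}=O(1)$, not merely $O(\log n)$, by a geometric series); the paper's joint-antichain construction is cleaner and exploits the conformal structure of $\nu$ through Lemma~\ref{lemma2.4} rather than through $V_{k,r}(\nu)$.
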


	\begin{remark}\label{rem30} By \cite[Theorem~1.1]{PRV}, we know that for $r\in (0,  +\infty)$, if $\mu$ is the condensation measure associated with the condensation system
		$(\{f_i\}_{i=1}^{N}, (p_i)_{i=0}^{N},\nu)$, where $\gn$ is any Borel probability measure with compact support $C$, then 
		\begin{equation} \label{Sh1} \max\{k_r, \underline{D}_r(\nu)\}\leq \underline{D}_r(\mu).
		\end{equation} 
		In this paper, the Borel probability measure $\gn$ is the image measure of an ergodic measure with bounded distortion on the code space $\set{1, 2, \cdots, M}^{\D N}$ with support a conformal set $C$. Also, we know that for such an image measure $\gn$, the quantization dimension $D(\gn)$ exists (see \cite[Theorem~3.1]{R6}), and hence the relation~\eqref{Sh1}, in this paper, reduces to 
		\[\max\{d_r, D_r(\nu)\} \le \underline{D}_r(\mu).\]
		Thus, in the sequel of this section, to complete the proof of Theorem~\ref{thm1}, we only prove the following relation: 
		\[\overline{D}_r(\mu) \le \max\{l_r, D_r(\nu)\}.\]
		\qed
	\end{remark}
	Motivated by \cite{PRV}, we define the finite maximal antichains in $ \sum^* $ and $ \sum_1^* $, respectively. ﻿Let $c_j = \|g_j'\|^r \hat{\nu}[j]$ and $\xi_r= \min\{\min_{1 \le j \le M} \|g_j'\|^r \hat{\nu}[j], \min_{1 \le j \le N} p_j b_j^r \}.$ Then, for $n \ge 1$ we define a finite maximal antichain in $\sum^*.$ Write 
	\[\Gamma_n^r:= \{\omega \in  \sum\nolimits_{1}^{*} : p_\omega b_\omega^r < \xi_r^n \le p_\omega b_\omega^r\}.\]
	Let $|\Gamma_n^r|$ denote the cardinality of the set $\Gamma_n^r$ then for all $n\ge 1,$ we write
	\begin{equation} \label{eq98} 
		\Phi_n^r:= \bigcup_{m=0}^{l_{\Gamma_n^r}-1} \sum\nolimits_1^m \cup \gD_{\Gamma_n^r}^*.
	\end{equation}
	It is an easy observation that for any $\omega \in  \Phi_n^r,$ we have $p_\omega b_{\omega}^r \ge \xi_r^n.$ Thus, for any $\omega \in \Phi_n^r,$ we define a finite maximal antichain in $\sum_1^*$ as follows: 
	$$\Gamma_n^r(\omega):= \{\tau \in \sum\nolimits_1^*: p_{\omega} b_{\omega}^r c_{\gt} < \xi_r^n \le p_{\omega} b_{\omega}^r c_{\gt_{-}} \}.$$
	Let $|\Gamma_n^r(\omega)|$ denote the cardinality of $\Gamma_n^r(\omega)$ for each $\omega \in \Phi_n^r.$ We define
	\begin{equation} \label{eq99}
		\psi_{n,r} := |\Gamma_n^r|+ \sum_{\omega \in \Phi_n^r} |\Gamma_n^r(\omega)|.
	\end{equation}
	﻿\begin{lemma}(see \cite{PRV})
		For each $n \ge 1,$ we have 
		$$K = \bigg( \bigcup_{\omega \in \Gamma_n^r} f_{\omega}(K)\bigg) \cup \bigg( \bigcup_{\omega \in \gD_{\Gamma_n^r}^*} f_{\omega}(C)\bigg) \cup \bigg( \bigcup_{m=0}^{l_{\Gamma_n^r}-1} \bigcup_{\omega \in \Sigma^m} f_{\omega}(C)\bigg).$$
	\end{lemma}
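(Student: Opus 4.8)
\emph{Proof proposal.} The plan is to recognise this statement as a direct specialization of the decomposition lemma of \cite{Z3} recalled earlier, which already supplies the displayed identity for an \emph{arbitrary} finite maximal antichain $\Gamma \subset \sum^*$. Hence the whole content of the proof reduces to verifying that $\Gamma_n^r$ is a finite maximal antichain in $\sum^*$; once that is done, one simply substitutes $\Gamma = \Gamma_n^r$ into that lemma, so that $\gD_\Gamma^*$ becomes $\gD_{\Gamma_n^r}^*$ and $l_\Gamma$ becomes $l_{\Gamma_n^r}$, and the three unions match verbatim.

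First I would record the monotonicity that drives everything. For $\omega = \omega_1\cdots\omega_m \in \sum^*$ we have $p_\omega b_\omega^r = \prod_{k=1}^{m} p_{\omega_k} b_{\omega_k}^r$, and since $0 < p_i \le 1$ and $0 < b_i < 1$ each single-symbol factor obeys $\xi_r \le p_i b_i^r \le \beta < 1$, where $\beta := \max_{1 \le i \le N} b_i^r$ and the left bound holds because $\xi_r \le \min_{1\le i\le N} p_i b_i^r$ by definition of $\xi_r$. Thus any strict extension of a string strictly decreases $p_\omega b_\omega^r$, and along any infinite sequence $s \in \sum$ the values $p_{s_{|_k}} b_{s_{|_k}}^r$ form a strictly decreasing sequence starting at $1$ (for $k=0$) and tending to $0$ (being dominated by $\beta^{k}$).

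Second, using these facts I would check the three defining properties, reading the membership condition as $p_\omega b_\omega^r < \xi_r^n \le p_{\omega_-} b_{\omega_-}^r$. For the \emph{antichain} property, suppose $\omega \prec \tau$ with $\omega \neq \tau$ and both in $\Gamma_n^r$; then $|\omega| \le |\tau|-1$, so $\omega \preceq \tau_-$ and hence $p_{\tau_-} b_{\tau_-}^r \le p_\omega b_\omega^r < \xi_r^n$, contradicting the requirement $\xi_r^n \le p_{\tau_-} b_{\tau_-}^r$ forced by $\tau \in \Gamma_n^r$. For \emph{maximality}, the strictly decreasing passage of $p_{s_{|_k}} b_{s_{|_k}}^r$ from $1 \ge \xi_r^n$ down to $0 < \xi_r^n$ yields, for each $s \in \sum$, a unique index $k$ with $p_{s_{|_k}} b_{s_{|_k}}^r < \xi_r^n \le p_{s_{|_{k-1}}} b_{s_{|_{k-1}}}^r$, i.e.\ $s_{|_k} \in \Gamma_n^r$. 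For \emph{finiteness}, from $\xi_r^n \le p_{\omega_-} b_{\omega_-}^r \le \beta^{|\omega|-1}$ I obtain the uniform length bound $|\omega| \le 1 + n\,\log\xi_r/\log\beta$ for every $\omega \in \Gamma_n^r$; since the alphabet $\{1,\dots,N\}$ is finite, only finitely many strings satisfy this bound.

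The argument is essentially bookkeeping, so I anticipate no serious obstacle. The only point requiring genuine care is pinning down that $\xi_r$ is a true lower bound and $\beta < 1$ a true upper bound for each factor $p_i b_i^r$, since it is exactly this two-sided control that simultaneously delivers maximality (the value falls below $\xi_r^n$) and finiteness (it cannot do so too slowly); everything else then follows by invoking the quoted lemma of \cite{Z3} with $\Gamma = \Gamma_n^r$.
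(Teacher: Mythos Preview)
Your proposal is correct and matches the paper's approach exactly: the paper does not prove this lemma itself but cites \cite{PRV}, having already recorded the general decomposition lemma from \cite{Z3} for an arbitrary finite maximal antichain and having declared (just before the definition) that $\Gamma_n^r$ is a finite maximal antichain in $\sum^*$. You have simply supplied the routine verification of that antichain property which the paper delegates to the citation, and your monotonicity/crossing argument is the standard one.
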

	\begin{lemma}\label{lemma99}
		Assume that the condensation system $(\{f_i\}_{i=1}^N, (p_i)_{i=0}^N, \nu)$ and IFS $\mathcal{I}_1 = (\mathbb{R}^d; \{g_j\}_{j=1}^M)$ satisfy the SSC. Then, we have $$V_{\psi_{n,r}r}(\mu) \le d^*\bigg(\sum_{\omega \in \Gamma_{n}^r} p_\omega b_{\omega}^r+ \sum_{\omega \in \Phi_{n}^r}\sum_{\tau \in \Gamma_{n}^r(\omega)} p_{\omega} b_{\omega}^r c \|g'_{\tau}\|^r \hat{\nu}([\tau]) \bigg),$$
		where $d^*= \max\{(diam(K))^r,(diam(C))^r\},$ and $\psi_{n,r}$ is given by Equation \eqref{eq99}.
	\end{lemma}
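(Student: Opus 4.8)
The plan is to build an explicit near-optimal set of $\psi_{n,r}$ points for $\mu$ by ``following'' the two finite maximal antichains $\Gamma_n^r$ and $\Gamma_n^r(\omega)$, and to bound the quantization error of order $r$ of this set from above by evaluating $\int \rho(x,A)^r\,d\mu(x)$ piece by piece over the decomposition of $K$ given by the preceding lemma. First I would recall, using the iterated form of Equation~\eqref{eq3}, that $\mu$ can be written as a finite convex combination of push-forwards $p_\omega\,\mu\circ f_\omega^{-1}$ for $\omega\in\Gamma_n^r$ together with condensation pieces $p_0 p_\omega\,\nu\circ f_\omega^{-1}$ for $\omega$ ranging over the finitely many strings in $\gD_{\Gamma_n^r}^*$ and the short strings of length $<l_{\Gamma_n^r}$; since $\Gamma_n^r$ is a finite maximal antichain, these pieces exhaust the mass of $\mu$ and their supports are disjoint by the SSC of the condensation system. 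This reduces the estimate to controlling, for each fixed $\omega$, the contribution of a single $f_\omega$-image.

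Next, for the self-similar pieces $f_\omega(K)$ with $\omega\in\Gamma_n^r$, I would place a single point of $A$ inside $f_\omega(K)$ (an arbitrary point of that set suffices). Then the local error is at most $p_\omega\cdot\bigl(\operatorname{diam} f_\omega(K)\bigr)^r \le p_\omega b_\omega^r (\operatorname{diam} K)^r$, using the bi-Lipschitz upper bound \eqref{eq24}. For the condensation pieces carried inside some $f_\omega(C)$ with $\omega\in\Phi_n^r$, the measure restricted there is (a multiple of) $p_0 p_\omega\,\nu\circ f_\omega^{-1}$, and now I would further subdivide $C$ according to the antichain $\Gamma_n^r(\omega)\subset\sum_1^*$ in the conformal code space: inside each $f_\omega\bigl(g_\tau(C)\bigr)$ for $\tau\in\Gamma_n^r(\omega)$ I put one point of $A$. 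The local error of such a cell is bounded, via \eqref{eq24} for the outer map $f_\omega$ and via \eqref{eq2000} for the conformal map $g_\tau$, by $p_0 p_\omega b_\omega^r\cdot c^r\|g_\tau'\|^r(\operatorname{diam} C)^r\cdot\nu(C_\tau)$, and $\nu(C_\tau)=\hat\nu[\tau]$. Summing the self-similar contributions over $\Gamma_n^r$ and the conformal contributions over $\omega\in\Phi_n^r,\ \tau\in\Gamma_n^r(\omega)$, and absorbing $p_0\le 1$, the constants $c^r$ and the diameters into $d^*=\max\{(\operatorname{diam} K)^r,(\operatorname{diam} C)^r\}$ and the written factor $c$, gives exactly the claimed upper bound. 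Finally I would count: the set $A$ so constructed has $|\Gamma_n^r|$ points from the self-similar cells plus $\sum_{\omega\in\Phi_n^r}|\Gamma_n^r(\omega)|$ points from the conformal cells, i.e. $\operatorname{card}(A)=\psi_{n,r}$ by \eqref{eq99}, so $V_{\psi_{n,r},r}(\mu)\le\int\rho(x,A)^r\,d\mu(x)$ is at most the displayed quantity.

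The main obstacle is bookkeeping rather than a genuine conceptual difficulty: one must make sure that \emph{every} piece of $\mu$ in the iterated decomposition is assigned at least one point of $A$ and is counted exactly once, in particular that the ``short-string'' condensation pieces $f_\omega(C)$ for $|\omega|<l_{\Gamma_n^r}$ and the pieces indexed by $\gD_{\Gamma_n^r}^*$ together form precisely the index set $\Phi_n^r$ of \eqref{eq98} — this is where the preceding lemma on the decomposition of $K$ for a finite maximal antichain is used — and that each $\Gamma_n^r(\omega)$ is indeed a finite maximal antichain in $\sum_1^*$ so that the conformal sub-cells cover $f_\omega(C)$ up to a $\nu$-null set. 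One should also check that the SSC of both the condensation system and of $\mathcal{I}_1$ guarantees the relevant images are pairwise disjoint, so that the error decomposes as a genuine sum with no double counting; but no interaction terms arise because a single point in a cell controls the whole cell by its diameter regardless of the behaviour of $A$ elsewhere, so in fact disjointness is used only to keep the count exact and not for the error estimate itself.
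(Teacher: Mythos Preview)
Your proposal is correct and follows the same line as the paper: build $A$ by placing one point in each cell $f_\omega(K)$ for $\omega\in\Gamma_n^r$ and one in each $f_\omega(g_\tau(C))$ for $\omega\in\Phi_n^r$, $\tau\in\Gamma_n^r(\omega)$, then bound $\int\rho(x,A)^r\,d\mu$ cell-by-cell via the bi-Lipschitz estimate \eqref{eq24}, the conformal estimate \eqref{eq2000}, and the SSC identities $\mu(f_\omega(K))=p_\omega$ and $\nu(g_\tau(C))=\hat\nu[\tau]$. The only cosmetic difference is that the paper computes $\mu(f_\omega(K))$ and $\mu(f_\omega(g_\tau(C)))$ explicitly from the iterated measure formula rather than through your pushforward decomposition, and it reaches $\nu(g_\tau(C))\le c\,\hat\nu[\tau]$ by a short detour invoking Lemma~\ref{lemma2.4}, whereas your direct identification $\nu(C_\tau)=\hat\nu[\tau]$ under the SSC of $\mathcal I_1$ is the cleaner route.
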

	\begin{proof}
		We construct a set $A$ by choosing an element $a_\omega \in f_{\omega}(K)$ for each $\omega \in \Gamma_{n}^r$. In addition, for each $\omega \in \Phi_{n}^r,$ we choose an element $a_{\omega,\tau} \in f_{\omega}(g_{\tau}(C))$ such that $\tau \in \Gamma_{n}^r(\omega)$. Clearly, $\text{card}(A) \le \psi_{n,r}.$ We have 	\begin{align*}
			V_{\psi_{n,r}r}(\mu) &\le \int_{K} d(x,A)^r d\mu(x) \\
			&\le \sum_{\omega \in \Gamma_{n}^r} \int_{f_{\omega}(K)} d(x,A)^r d\mu(x) + \sum_{\omega \in \Phi_{n}^r} \sum_{\sigma \in \Gamma_{n}^r(\omega)} \int_{f_{\omega}(g_{\tau}(C))} d(x,A)^r d\mu(x) \\
			&\le \sum_{\omega \in \Gamma_{n}^r} \int_{f_{\omega}(K)} d(x,a_{\omega})^r d\mu(x) + \sum_{\omega \in \Phi_{n}^r} \sum_{\tau \in \Gamma_{n}^r(\omega)} \int_{f_{\omega}(g_{\tau}(C))} d(x,a_{\omega,\tau})^r d\mu(x) \\
			&\le \sum_{\omega \in \Gamma_{n}^r} (\text{diam}(f_{\omega}(K)))^r \mu(f_{\omega}(K)) + \sum_{\omega \in \Phi_{n}^r} \sum_{\tau \in \Gamma_{n}^r(\omega)} (\text{diam}(f_{\omega}(g_{\tau}(C))))^r \mu(f_{\omega}(g_{\tau}(C))) \\
			&\le \sum_{\omega \in \Gamma_{n}^r} b_{\omega}^r (\text{diam}(K))^r \mu(f_{\omega}(K)) + \sum_{\omega \in \Phi_{n}^r} \sum_{\tau \in \Gamma_{n}^r(\omega)} b_{\omega}^r c\|g'{_\tau}\|^r (\text{diam}(C))^r \mu(f_{\omega}(g_{\tau}(C))) \\& \le d^* \sum_{\omega \in \Gamma_{n}^r} b_{\omega}^r  \mu(f_{\omega}(K)) + \sum_{\omega \in \Phi_{n}^r} \sum_{\tau \in \Gamma_{n}^r(\omega)} b_{\omega}^r c\|g'{_\tau}\|^r \mu(f_{\omega}(g_{\tau}(C))),
		\end{align*}
		where $d^* = \max\{(\text{diam}(K))^r, (\text{diam}(C))^r\}.$ Now, we find out the values of $\mu(f_{\omega}(K))$ and $\mu(f_{\omega}(g_{\tau})(C))$ for $\omega \in \Gamma_{n}^r$ and $\tau \in \Gamma_{n}^r(\omega).$
		Let $\omega \in \Gamma_{n}^r$ and $|\omega|=n_1.$ Since $\mu= \sum_{\sigma\in \sum^{n_1}} p_{\sigma} \mu \circ f_{\sigma}^{-1} + p_0 \sum_{m=0}^{n_1-1} \sum_{\sigma \in \sum^m} p_{\sigma} \nu \circ f_{\sigma}^{-1},$ and condensation system satisfy the SSC, we have 
		\begin{align*}
			\mu(f_{\omega}(K)) &= \sum_{\sigma \in \sum^{n_1}} p_{\sigma} \mu \circ f_{\sigma}^{-1}(f_{\omega}(K)) + p_0 \sum_{m=0}^{n_1 -1} \sum_{\sigma\in \sum^m} p_{\sigma} \nu \circ f_{\sigma}^{-1}(f_{\omega}(K)) \\& = p_{\omega} \mu(f_{\omega}^{-1}(f_{\omega}(K))) + p_0 \sum_{m=0}^{n_1 -1} \sum_{\sigma \in \sum^m} p_{\sigma} \nu \circ f_{\sigma}^{-1}(f_{\omega}(K)).
		\end{align*}
		Since the condensation system satisfies the SSC, we have $\mu(f_{\omega}(K))= p_{\omega}.$
		Also,
		\begin{align*}
			\mu(f_{\omega}(g_{\tau}(C))) &= \sum_{\sigma \in \sum^{n_1}} p_{\sigma} \mu \circ f_{\sigma}^{-1}(f_{\omega}(g_{\tau}(C))) + p_0 \sum_{m=0}^{n_1-1} \sum_{\sigma \in \sum^m} p_{\sigma} \nu \circ f_{\sigma}^{-1}(f_{\omega}(g_{\tau}(C))) \\&= p_{\omega}(\mu(g_{\tau}(C))) \\&= p_{\omega}\bigg(p_0 \nu(g_{\tau}(C))+ \sum_{i=1}^N p_i \mu \circ f_{i}^{-1}(g_{\tau}(C))\bigg).
		\end{align*}
		By using Lemma \ref{lemma2.4} and the fact that IFS $\mathcal{I}_1$ satisfies SSC, we have 
		\begin{align*}
			\mu(f_{\omega}(g_{\tau}(C))) &= p_{\omega} p_0 \nu(g_{\tau}(C)) \\& \le p_{\omega} p_0 c \sum_{\tau' \in \Gamma_n^{r}(\omega)} \hat{\nu}([\tau']) \nu \circ g_{\tau'}^{-1}(g_{\tau}(C)) \\& = p_{\omega} p_0 c \hat{\nu}([\tau])\nu(C) = p_{\omega} p_0 c \hat{\nu}([\tau]) \leq  p_{\omega} c \hat{\nu}([\tau]).
		\end{align*}
		By using the above inequalities, we get the required result.

	\end{proof}
	\begin{prop}\label{prop00}
		Let the condensation system $(\{f_i\}_{i=1}^N,(p_i)_{i=0}^N, \nu),$ and IFS $\mathcal{I}_1:=(\mathbb{R}^d;\{g_j\}_{j=1}^M)$ satisfy the SSC. Let $\mu$ be the condensation measure associated with the condensation system $(\{f_i\}_{i=1}^N,(p_i)_{i=0}^N, \nu)$. Then, for all $s > \max\{l_r,k_r\}$, we have 
		\[
		\limsup_{n \to \infty}  nV_{n,r}^{\frac{s}{r}} (\mu) < \infty,\] where $l_r$ is determined by the relation $\sum_{i=1}^N (p_i b_i)^{\frac{l_r}{r+l_r}}=1$ and $k_r$ is uniquely determined by the expression \eqref{eq97}.  
	\end{prop}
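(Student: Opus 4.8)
The plan is to bound the quantization error along the subsequence $\bigl(\psi_{n,r}\bigr)_{n\ge 1}$ produced in Lemma~\ref{lemma99}, and then interpolate to arbitrary $n$. Set $t:=\tfrac{s}{r+s}$, $t_0:=\tfrac{l_r}{r+l_r}$, $t_1:=\tfrac{k_r}{r+k_r}$ and $t^\ast:=\max\{t_0,t_1\}$; since $x\mapsto\tfrac{x}{r+x}$ is increasing and $s>\max\{l_r,k_r\}$, we have $t>t^\ast$, equivalently $s-t^\ast(r+s)>0$. Two elementary facts will be used repeatedly. First, $t_0$ is exactly the exponent for which $\sum_{i=1}^N(p_ib_i^r)^{t_0}=1$, and because $p_{\omega\sigma}=p_\omega p_\sigma$ and $b^r_{\omega\sigma}=b^r_\omega b^r_\sigma$, the numbers $(p_\omega b_\omega^r)^{t_0}$ form a mass distribution on $\sum^*$, so $\sum_{\omega\in\Gamma}(p_\omega b_\omega^r)^{t_0}=1$ for every finite maximal antichain $\Gamma\subset\sum^*$. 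Second, on $\Gamma_n^r$ one has $\xi_r^{\,n+1}\le p_\omega b_\omega^r<\xi_r^{\,n}$, on $\Phi_n^r$ one has $p_\omega b_\omega^r\ge\xi_r^{\,n}$, and on each $\Gamma_n^r(\omega)$ one has $p_\omega b_\omega^r\,\|g_\tau'\|^r\hat\nu[\tau]<\xi_r^{\,n}\le p_\omega b_\omega^r\,\|g_{\tau_-}'\|^r\hat\nu[\tau_-]$.

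The first and hardest step is a uniform estimate
\[
\sum_{\tau\in\Gamma}\bigl(\|g_\tau'\|^r\hat\nu[\tau]\bigr)^{t_1}\ \le\ B
\]
valid for \emph{every} finite maximal antichain $\Gamma$ in $\sum_1^*$, with $B$ depending only on $c,L,r,k_r$. Unlike the $(p_ib_i^r)$--data, the quantities $\|g_\tau'\|^r\hat\nu[\tau]$ are only submultiplicative up to the distortion constants in \eqref{D1} and \eqref{eq2000}, so the mass--distribution shortcut is unavailable and one must instead exploit the two--sided level estimates of Lemma~\ref{lemma2.4}. I would lift $\Gamma$ to its maximal length $M_\Gamma$: each $\sigma$ with $|\sigma|=M_\Gamma$ factors uniquely as $\sigma=\tau\rho$ with $\tau\in\Gamma$ and $|\rho|=M_\Gamma-|\tau|$, and repeated use of \eqref{D1} and \eqref{eq2000} gives $\bigl(\|g_\sigma'\|^r\hat\nu[\sigma]\bigr)^{t_1}\ge\kappa_0\bigl(\|g_\tau'\|^r\hat\nu[\tau]\bigr)^{t_1}\bigl(\|g_\rho'\|^r\hat\nu[\rho]\bigr)^{t_1}$ for some $\kappa_0>0$; summing over all $\sigma$ at level $M_\Gamma$, applying the upper bound of Lemma~\ref{lemma2.4} on the left and the lower bounds on the inner sums over $\rho$ on the right collapses everything to the constant $B$. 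Keeping every distortion factor pointing the right way is the main obstacle of the whole argument.

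Granting this, I assemble the pieces. Factoring $p_\omega b_\omega^r=(p_\omega b_\omega^r)^{t_0}(p_\omega b_\omega^r)^{1-t_0}$ and using $p_\omega b_\omega^r<\xi_r^{\,n}$ on $\Gamma_n^r$ gives $\sum_{\omega\in\Gamma_n^r}p_\omega b_\omega^r\le\xi_r^{\,n(1-t_0)}$. For the double sum in Lemma~\ref{lemma99}, for each fixed $\omega\in\Phi_n^r$ I factor $p_\omega b_\omega^r\|g_\tau'\|^r\hat\nu[\tau]<\xi_r^{\,n}$ with exponent $t_1$, apply $B$ to $\sum_{\tau\in\Gamma_n^r(\omega)}(\|g_\tau'\|^r\hat\nu[\tau])^{t_1}$, and sum over $\omega\in\Phi_n^r$; since $\Phi_n^r$ consists of the strict prefixes of $\Gamma_n^r$, a level--by--level comparison with the mass distribution $(p_\omega b_\omega^r)^{t_0}$ gives $\sum_{\omega\in\Phi_n^r}(p_\omega b_\omega^r)^{t_0}\le M_{\Gamma_n^r}+1=O(n)$ (as $p_\omega b_\omega^r\ge\xi_r^{\,n+1}$ on $\Gamma_n^r$). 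Combining these, and using $p_\omega b_\omega^r\ge\xi_r^{\,n}$ to trade $t_1$ for $t_0$ when $t_1<t_0$, yields via Lemma~\ref{lemma99} the bound $V_{\psi_{n,r},r}(\mu)\le C\,n\,\xi_r^{\,n(1-t^\ast)}$. A parallel counting argument bounds $\psi_{n,r}$: from $|\Gamma_n^r|\le\xi_r^{-(n+1)t_0}$ and, for each $\omega\in\Phi_n^r$, the lower bound $\|g_\tau'\|^r\hat\nu[\tau]\ge\kappa\,\xi_r^{\,n}/(p_\omega b_\omega^r)$ on $\Gamma_n^r(\omega)$ combined with $B$, one gets $|\Gamma_n^r(\omega)|\le B\kappa^{-t_1}\xi_r^{-nt_1}(p_\omega b_\omega^r)^{t_1}$ and hence $\psi_{n,r}\le C'\,n\,\xi_r^{-nt^\ast}$. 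Therefore
\[
\psi_{n,r}\,V_{\psi_{n,r},r}(\mu)^{s/r}\ \le\ C''\,n^{\,1+s/r}\,\xi_r^{\,n\,(s-t^\ast(r+s))/r}\ \longrightarrow\ 0,
\]
the geometric factor winning because $s-t^\ast(r+s)>0$ and $0<\xi_r<1$.

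For arbitrary $m$, choose $n$ with $\psi_{n,r}\le m<\psi_{n+1,r}$, which is legitimate for large $m$ since $\psi_{n,r}\ge|\Gamma_n^r|>\xi_r^{-nt_0}\to\infty$. Passing from the threshold $\xi_r^{\,n}$ to $\xi_r^{\,n+1}$ only refines each cell $f_\omega(K)$, $\omega\in\Gamma_n^r$, and each cell $f_\omega(g_\tau(C))$, $\tau\in\Gamma_n^r(\omega)$, into boundedly many subcells, because the ratio of consecutive thresholds is the fixed constant $\xi_r$ (so every refining maximal antichain has cardinality at most $\xi_r^{-2t_0}$, resp.\ at most a constant multiple of $\xi_r^{-2t_1}$ by Step~1); hence $\psi_{n+1,r}\le C_0\,\psi_{n,r}$ for a universal constant $C_0$. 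Since $m\mapsto V_{m,r}(\mu)$ is non--increasing,
\[
m\,V_{m,r}(\mu)^{s/r}\ \le\ \psi_{n+1,r}\,V_{\psi_{n,r},r}(\mu)^{s/r}\ \le\ C_0\,\psi_{n,r}\,V_{\psi_{n,r},r}(\mu)^{s/r},
\]
which tends to $0$ by the previous display; in particular $\limsup_{m\to\infty}m\,V_{m,r}(\mu)^{s/r}<\infty$, as claimed.
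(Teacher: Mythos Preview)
Your argument is correct, but the route differs from the paper's in two places, and the paper's choices make the proof noticeably shorter.

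The paper does not work at the critical exponents $t_0=\tfrac{l_r}{r+l_r}$ and $t_1=\tfrac{k_r}{r+k_r}$. Instead it works directly at $t=\tfrac{s}{r+s}>t^\ast$. From Lemma~\ref{lemma99} it first uses the crude bound $V_{\psi_{n,r},r}(\mu)\le d^\ast c\,\xi_r^{\,n}\psi_{n,r}$ (every summand is $\le\xi_r^{\,n}$, and there are $\psi_{n,r}$ of them). For the counting, it bounds $\psi_{n,r}\,\xi_r^{(n+1)t}$ term by term: the lower threshold gives $\xi_r^{(n+1)t}\le (p_\omega b_\omega^r)^t$ on $\Gamma_n^r$ and $\xi_r^{(n+1)t}\le (p_\omega b_\omega^r\|g_\tau'\|^r\hat\nu[\tau])^t$ on each $\Gamma_n^r(\omega)$. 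Because $t>t_0$, one has $\sum_i(p_ib_i^r)^t<1$, so $\sum_{\omega\in\Phi_n^r}(p_\omega b_\omega^r)^t$ is dominated by a \emph{convergent} geometric series (no factor of $n$); and because $t>t_1$, the pressure $h(t)$ from Lemma~\ref{lemma2.4} is negative, so $\sum_{\tau\in\Gamma_n^r(\omega)}(\|g_\tau'\|^r\hat\nu[\tau])^t$ is bounded by a constant. This yields $\psi_{n,r}\le C\,\xi_r^{-(n+1)t}$ and hence $\psi_{n,r}V_{\psi_{n,r},r}^{s/r}(\mu)\le C'$ directly, without the polynomial--versus--exponential trade-off you carry through. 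What your critical--exponent approach buys is a self-contained and carefully justified antichain bound $\sum_{\tau\in\Gamma}(\|g_\tau'\|^r\hat\nu[\tau])^{t_1}\le B$ via lifting to level $M_\Gamma$; the paper's corresponding step appeals to \eqref{eq96}, which is literally stated only for level sets, so your treatment of that point is actually more precise.

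For the passage to general $m$, the paper simply cites \cite[Lemma~2.4]{Z4} rather than proving $\psi_{n+1,r}\le C_0\psi_{n,r}$ by hand. Your refinement argument is fine (each cell at threshold $\xi_r^{\,n}$ splits into boundedly many subcells at threshold $\xi_r^{\,n+1}$, and the new $\omega\in\Phi_{n+1}^r\setminus\Phi_n^r$ are boundedly many per element of $\Gamma_n^r$, each with $|\Gamma_{n+1}^r(\omega)|$ bounded); it just reproves what the cited lemma provides.
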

	\begin{proof}
		By {Lemma \ref{lemma99}} and using the definition of $\Gamma_n^r$ and $\Gamma_n^r(\omega),$ we have,
		\begin{equation}\label{eq19}
			V_{\psi_{n,r},r}(\mu) \le d^*\bigg(\sum_{\omega \in \Gamma_{n}^r} p_\omega b_{\omega}^r+ \sum_{\omega \in \Phi_{n}^r}\sum_{\tau \in \Gamma_{n}^r(\omega)} p_{\omega} b_{\omega}^r c \|g'_{\tau}\|^r \hat{\nu}([\tau]) \bigg) \le d^* c \xi_r^n \psi_{n,r}.
		\end{equation}
		Now, we aim to estimate the bound for $\psi_n^r.$ Since $\sum_{i=1}^N (p_i b_i^r)^{\frac{l_r}{r + l_r}}=1$ and $\Gamma_n^r$ is finite maximal antichain in $\sum^*,$ we have $\sum_{\go \in \Gamma_n^r}(p_{\go} b_{\go})^{\frac{l_r}{r + l_r}}=1.$ Let $s \ge \{l_r, k_r\}.$ Then, we have 
		\[
		\sum_{\go \in \Gamma_n^r}(p_{\go} b_{\go})^{\frac{s}{r + s}}<1.
		\]
		By using the definitions of $\psi_{n}^r, \Gamma_{n}^r$, and $\Gamma_{n}^r(\go)$ for each $\go \in \Phi_{n}^r,$ we have
		\begin{align*}
			\psi_{n,r} \xi_r^{\frac{(n+1)s}{r+s}} &\le \sum_{\omega \in \Gamma_n^r} (p_{\omega}b_{\omega}^r)^{\frac{s}{r+s}} + \sum_{\omega \in \Phi_{n}^r} \sum_{\tau \in \Gamma_{n}^{r}(\omega)} (p_{\omega} b_{\omega}^r \|g_{\tau}'\|^r \hat{\nu}([\tau]))^{\frac{s}{r+s}} \\& \le \sum_{\omega \in \Gamma_n^r}(p_{\omega}b_{\omega}^r)^{\frac{s}{r+s}} + \sum_{\omega \in \Phi_{n}^r} (p_{\omega} b_{\omega}^r)^{\frac{s}{r+s}}\sum_{\tau \in \Gamma_{n}^{r}(\omega)} ( \|g_{\tau}'\|^r \hat{\nu}([\tau]))^{\frac{s}{r+s}} \\&\le 1+ e^{M_{\Gamma_n^r(\go)}h(s/r+s)} (c^{r} L)^{\frac{s}{r+s}} \sum_{\omega \in \Phi_n^r} (p_{\omega} b_{\omega}^r)^{\frac{s}{r+s}} \hspace{3cm} \text{(by Equation~\eqref{eq96})}\\& \le 1+ e^{M_{\Gamma_n^r(\go)}h(s/r+s)} (c^{r} L)^{\frac{s}{r+s}} \bigg(\sum_{m=0}^{l_{\Gamma_{n}^r}-1}\sum_{\omega \in \sum^m} (p_{\omega} b_{\omega}^r)^{\frac{s}{r+s}}+ \sum_{m=l_{\Gamma_{n}^r}}^{M_{\Gamma_{n}^r}}\sum_{\omega \in \sum^m} (p_{\omega} b_{\omega}^r)^{\frac{s}{r+s}} \bigg) \\& = 1+ e^{M_{\Gamma_n^r(\go)}h(s/r+s)} (c^{r} L)^{\frac{s}{r+s}} \bigg( \sum_{m=0}^{M_{\Gamma_{n}^r}}\sum_{\omega \in \sum^m} (p_{\omega} b_{\omega}^r)^{\frac{s}{r+s}} \bigg) \\& \le e^{M_{\Gamma_n^r(\go)}h(s/r+s)} (c^{r} L)^{\frac{s}{r+s}}\bigg( 1+ \sum_{m=0}^{M_{\Gamma_n^r}}\bigg(\sum_{i=1}^N (p_i b_i^r)^{\frac{s}{r+s}}\bigg)^m\bigg) \\& \le e^{M_{\Gamma_n^r(\go)}h(s/r+s)} (c^{r} L)^{\frac{s}{r+s}} \bigg(\frac{2}{1-\sum_{i=1}^{N}(p_i b_i^r)^{\frac{s}{r+s}}}\bigg).
		\end{align*}
		Combining the above inequality with Equation~\eqref{eq19}, we have
		\begin{align*}
			\psi_{n,r} V_{\psi_{{n,r},r}}^{\frac{s}{r}} (\mu) &\le (d^*)^{\frac{s}{r}} \xi_{r}^{\frac{ns}{r}} \psi_{n,r}^{\frac{s+r}{r}} \\& \le (d^*)^{\frac{s}{r}} \xi_{r}^{\frac{-s}{r}}e^{M_{\Gamma_n^r(\go)}h(s/r+s)} (c^{r} L)^{\frac{s}{r+s}} \bigg(\frac{2}{1-\sum_{i=1}^{N}(p_i b_i^r)^{\frac{s}{r+s}}}\bigg)^{\frac{s+r}{r}}.
		\end{align*}
		Hence, for all $s > \max\{l_r,k_r\}$, we can deduce that  $\limsup_{n \to \infty} \psi_{n,r} V_{\psi_{n,r}}^{\frac{s}{r}} < \infty.$ By using \cite[Lemma~2.4]{Z4}, we can infer that 
		$$\limsup_{n \to \infty} n V_{n,r}^{\frac{s}{r}} < \infty  \te{ for all } s > \max\{l_r,k_r\}.$$ Thus, the proof of the proposition is complete.
	\end{proof} 
	\subsection*{Proof of the main theorem Theorem~\ref{thm1}}
	From Proposition~\ref{prop01} and Proposition~\ref{prop00}, we have $\overline{D}_r(\mu) \leq \max\{l_r, k_r\}$. Also, from Remark~\ref{rem30}, it follows that $\max\{d_r, D_r(\nu)\} \leq \underline{D}_r(\mu)$. Further, by \cite[Theorem~3.1]{R6}, the quantization dimension of $\nu$ exists and is equal to $k_r$, i.e., $D_r(\nu) = k_r$. Hence,  
	\[
	\max\{d_r, D_r(\nu)\} \leq \underline{D}_r(\mu) \leq \overline{D}_r(\mu) \leq \max\{l_r, D_r(\nu)\}.
	\]  
	This completes the proof of the theorem.
	\qed 
		\begin{example}
		Let $ \mathcal{I}_1 = \{ f_1, f_2 \} $ be an IFS, where $ f_1(x) = \frac{x}{3} $ and $ f_2(x) = \frac{x+2}{3} $.  The invariant set corresponding to $ \mathcal{I}_1 $ is a Cantor set contained in $ [0, 1] $. Let $ \mathcal{I}_2 = \{ g_1, g_2 \} $ be a conformal IFS, where
		\[
		g_1(x) = \frac{2}{5} + \frac{1}{3} \left( x - \frac{2}{5} \right), \te{ and } g_2(x) = \frac{3}{5} + \frac{1}{3} \left( x - \frac{3}{5} \right).
		\]
		Let $ \nu $ be an image measure of a shift-invariant ergodic measure on the symbolic space $\set{1, 2}^{\D N}$. Since 
		\[
		g_i \left( \left[ \frac{2}{5}, \frac{3}{5} \right] \right) \subset \left[ \frac{2}{5}, \frac{3}{5} \right],
		\]
		the support of $ \nu $ is contained in $ \left[ \frac{2}{5}, \frac{3}{5} \right] $. Now, we can consider the corresponding condensation system as $(\{f_i\}_{i=1}^2, (\frac 13, \frac 13, \frac 13), \nu)$. It is easy to observe that the inhomogeneous invariant set is contained in $ [0, 1] $. To check the validity of Theorem~\ref{thm1}, it is sufficient to check the SSC for the condensation system $(\{f_i\}_{i=1}^2, (\frac 13, \frac 13, \frac 13), \nu)$, and the SSC for the IFS $ \mathcal{I}_2 $. We have,
		\[
		f_1([0, 1]) \cap f_2([0, 1]) = \emptyset,
		\te{ and }  
		f_i([0, 1]) \cap \left[ \frac{2}{5}, \frac{3}{5} \right] \neq \emptyset.
		\]
		Also, 
		\[
		g_1\left( \left[\frac{2}{5}, \frac{3}{5}\right] \right) \cap g_2\left( \left[\frac{2}{5}, \frac{3}{5}\right] \right) = \emptyset.
		\]
		Thus, the conditions in Theorem \ref{thm1} hold.
		Then, $ l_2 = d_2 $ is given by
		\[
		2 \left( \frac{1}{3} \cdot \frac{1}{3}^2 \right)^{\frac{l_2}{2 + l_2}} = 1.
		\]
		Solving the above for $ l_2 $, we get
		\[
		l_2 = \frac{2 \log \frac{1}{2}}{\log \frac{2}{27}} \approx 0.5326.
		\]
	\end{example}
	\section{Optimal quantization for an infinite discrete distribution} \label{sec4}
	In this section, we investigate the optimal quantization for an infinite discrete distribution with respect to the squared Euclidean distance, i.e., when $r=2$. 
	Let $\gn$ be an infinite discrete distribution with support $C:=\set{a_j : j\in \D N}$, where $a_j:=\frac{2}{5}+\frac 1 5\frac{2^{j-1}-1}{2^{j-1}}$. Notice that $C=\left\{\frac{2}{5},\frac{1}{2},\frac{11}{20},\frac{23}{40},\frac{47}{80},\frac{19}{32},\frac{191}{320},\frac{383}{640}, \cdots \right\}\sci [\frac 25, \frac 35]\sci \D R$. Let the probability mass function of $\gn$ be given by $f$ such that
	\[f(x)=\left\{\begin{array}{cc}
		\frac 1{2^j} & \te{ if }   x=a_j \te{ for } j\in \D N,  \\
		0 &  \te{ otherwise}.
	\end{array}\right.\]
	In this section, we determine the optimal sets of $n$-means and the $n$th quantization errors for all $n\in \D N$, and the quantization dimension of $\gn$. 
	Set
	$A_k:=\set{a_k, a_{k+1}, a_{k+2}, \cdots}$, and $b_k=E(X: X \in A_k)$, where $k\in \D N$. Then, by the definition of conditional expectation, we see that
	\[b_k=\frac{\sum_{j=k}^\infty a_j\frac 1{2^j}}{\sum_{j=k}^\infty \frac 1{2^j}}.\]
	For any Borel measurable function $g$ on $\D R$, by $\int g d\gn$, it is meant that
	\[\int g d\gn=\sum_{j=1}^\infty g(a_j) \gn(a_j)=\sum_{j=1}^\infty g(a_j)\frac 1{2^j}.\]
	﻿\begin{lemma}\label{lemma001}
		For a probability distribution $\gn$, let $E(\gn)$ represent the expected value of a random variable with distribution $\gn$, and $V(\gn)$ represent its variance. Then,
		$E(\gn)=\frac{7}{15}, \te{ and } V(\gn)=\frac{8}{1575}.$
	\end{lemma}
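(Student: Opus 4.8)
The plan is to reduce both assertions to a pair of elementary geometric-series computations, the only nontrivial preliminary being to put the atoms $a_j$ into a convenient closed form. Since $\frac{2^{j-1}-1}{2^{j-1}} = 1 - 2^{-(j-1)}$ and $\frac{1}{5}\,2^{-(j-1)} = \frac{2}{5}\,2^{-j}$, one gets
\[
a_j \;=\; \frac{2}{5} + \frac{1}{5}\bigl(1 - 2^{-(j-1)}\bigr) \;=\; \frac{3}{5} - \frac{2}{5}\,2^{-j},
\]
which one can sanity-check against the first few entries listed for $C$ ($a_1=\frac25$, $a_2=\frac12$, $a_3=\frac{11}{20}$). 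This rewriting is the one step that needs a little care; everything after it is bookkeeping. Note also that $0\le a_j\le 1$, so all the series below converge absolutely and may be summed termwise.

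First I would compute the mean. Summing $a_j 2^{-j}$ termwise and using $\sum_{j\ge1}2^{-j}=1$ and $\sum_{j\ge1}4^{-j}=\frac13$,
\[
E(\gn) \;=\; \sum_{j=1}^{\infty} a_j\, 2^{-j} \;=\; \frac{3}{5}\sum_{j=1}^{\infty}2^{-j} - \frac{2}{5}\sum_{j=1}^{\infty}4^{-j} \;=\; \frac{3}{5} - \frac{2}{5}\cdot\frac{1}{3} \;=\; \frac{7}{15}.
\]
Next, for the variance I would use $V(\gn) = E(X^2) - \bigl(E(\gn)\bigr)^2$ for $X$ with law $\gn$. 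Expanding $a_j^2 = \frac{9}{25} - \frac{12}{25}2^{-j} + \frac{4}{25}4^{-j}$ and summing against $2^{-j}$, now also invoking $\sum_{j\ge1}8^{-j}=\frac17$,
\[
E(X^2) \;=\; \sum_{j=1}^{\infty} a_j^2\, 2^{-j} \;=\; \frac{9}{25} - \frac{12}{25}\cdot\frac{1}{3} + \frac{4}{25}\cdot\frac{1}{7} \;=\; \frac{39}{175},
\]
and hence, after clearing the common denominator $1575$,
\[
V(\gn) \;=\; \frac{39}{175} - \left(\frac{7}{15}\right)^2 \;=\; \frac{39}{175} - \frac{49}{225} \;=\; \frac{351 - 343}{1575} \;=\; \frac{8}{1575}.
\]

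There is no real obstacle here: the statement is a routine computation, and the only things to watch are the algebraic simplification of $a_j$ to $\frac35 - \frac25 2^{-j}$ and keeping the rational arithmetic straight, with all the analytic content carried by the three closed-form geometric sums $\sum 2^{-j}=1$, $\sum 4^{-j}=\frac13$, and $\sum 8^{-j}=\frac17$ together with the absolute convergence that justifies termwise summation.
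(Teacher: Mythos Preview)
Your proof is correct and takes essentially the same approach as the paper: both compute $E(\gn)$ and $V(\gn)$ directly by summing the relevant geometric series. The paper's proof is terse and uses the definition $V(\gn)=\sum_j (a_j-E(\gn))^2 2^{-j}$ without showing intermediate steps, whereas you supply the details and use the equivalent formula $V(\gn)=E(X^2)-E(\gn)^2$; this is a cosmetic rather than a substantive difference.
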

	﻿
	\begin{proof} Using the definitions of expected value and the variance of a random variable, we have
		\[E(\gn)=\sum_{j=1}^\infty a_j \frac 1{2^j}=\frac{7}{15}, \te{ and } V(\gn)=\sum_{j=1}^\infty (a_j-E(\gn))^2 \frac 1{2^j}=\frac{8}{1575},\]
		and thus the proof of the lemma is complete.
	\end{proof}
	﻿
	\begin{note}
		By Lemma~\ref{lemma001}, it follows that the optimal set of one-mean for the discrete distribution $\gn$ consists of the expected value $\frac{7}{15}$ and the corresponding quantization error is the variance $V(\gn)$ of $\gn$, i.e., $V_1(\gn)=V(\gn)=\frac{8}{1575}$.
	\end{note}
	The following proposition is known. 
	\begin{prop}  (see \cite{GG, GL1})\label{prop1000}
		Let $\ga$ be an optimal set of $n$-means, $a \in \ga$, and $M(a|\ga)$ be the Voronoi region generated by $a\in \ga$.
		Then, for every $a \in\ga$,
		$(i)$ $\mu(M(a|\ga))>0$, $(ii)$ $ \mu(\partial M(a|\ga))=0$, $(iii)$ $a=E(X : X \in M(a|\ga))$, and $(iv)$ $\mu$-almost surely the set $\set{M(a|\ga) : a \in \ga}$ forms a Voronoi partition of $\D R^d$.
	\end{prop}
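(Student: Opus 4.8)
This proposition records the classical first-order (Lloyd-type) necessary conditions for an optimal $n$-point quantizer in the $L^2$ setting, and the plan is to deduce each of $(i)$--$(iv)$ from the optimality of $\ga$ by testing it against a suitably modified competitor. The single device used throughout is the \emph{partition comparison inequality}: for any finite nonempty $\gb\ci\D R^d$ with $\mathrm{card}(\gb)\le n$ and any Borel partition $\set{B_a}_{a\in\gb}$ of $\D R^d$ one has $V_{n,2}(\mu)\le\int\rho(x,\gb)^2\,d\mu(x)\le\sum_{a\in\gb}\int_{B_a}\|x-a\|^2\,d\mu(x)$, while for a Voronoi partition $\set{W_a}_{a\in\ga}$ subordinate to $\ga$ (each point assigned to one of its nearest elements of $\ga$) the last inequality is an equality.

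I would first handle $(i)$, obtaining $\mathrm{card}(\ga)=n$ on the way. If $\mu(M(a_0|\ga))=0$ for some $a_0\in\ga$, then $\rho(\,\cdot\,,\ga\setminus\set{a_0})$ agrees with $\rho(\,\cdot\,,\ga)$ off the $\mu$-null set $M(a_0|\ga)$, whence $V_{n-1,2}(\mu)\le V_{n,2}(\mu)$; but $\mathrm{supp}\,\mu$ is infinite (for the discrete distribution of this section it is the countable set $C$), so the strict inequality $V_{n-1,2}(\mu)>V_{n,2}(\mu)$ of the Graf--Luschgy theory is contradicted. Hence every Voronoi cell has positive $\mu$-mass and $\ga$ has exactly $n$ points. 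For $(iii)$, fix a Voronoi partition $\set{W_a}$ and set $c_a:=E(X : X\in W_a)=\mu(W_a)^{-1}\int_{W_a}x\,d\mu(x)$, which is well defined by $(i)$. Since $\int_{W_a}\|x-b\|^2\,d\mu=\int_{W_a}\|x-c_a\|^2\,d\mu+\mu(W_a)\|b-c_a\|^2$, the competitor $\gb:=\set{c_a:a\in\ga}$ satisfies $V_{n,2}(\mu)\le\int\rho(x,\gb)^2\,d\mu\le\sum_a\int_{W_a}\|x-c_a\|^2\,d\mu\le\sum_a\int_{W_a}\|x-a\|^2\,d\mu=V_{n,2}(\mu)$, so equality holds in every cell, forcing $a=c_a=E(X : X\in W_a)$.

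For $(ii)$ I would use that $\partial M(a|\ga)\ci\bigcup_{b\in\ga,\,b\ne a}H_{a,b}$, a finite union of perpendicular-bisector hyperplanes, which in the present one-dimensional situation reduces to the finitely many midpoints $m_i=\tfrac12(a_i+a_{i+1})$ of consecutive points $a_1<\dots<a_n$ of $\ga$; thus $(ii)$ is equivalent to the assertion that no $m_i$ is an atom of $\mu$. If some $m_i$ carried mass $p>0$, I would move that atom from the cell of $a_i$ to the cell of $a_{i+1}$ and replace $a_i,a_{i+1}$ by the conditional means $a_i',a_{i+1}'$ of the two modified cells; then, by the partition comparison inequality, the cost of the resulting $n$-point set is at most $V_{n,2}(\mu)$ plus $p\big((m_i-a_{i+1})^2-(m_i-a_i)^2\big)-\mu([m_{i-1},m_i))(a_i-a_i')^2$. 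The first bracket vanishes because $m_i$ is equidistant from $a_i$ and $a_{i+1}$, and the second term is strictly negative because $\mu([m_{i-1},m_i))>0$ (otherwise $a_i=m_i$, impossible) and $a_i'<a_i$ (the atom at $m_i$ shifts the mean), so the competitor strictly beats $\ga$, a contradiction. (In $\D R^d$ the analogous perturbation argument is the one carried out in \cite{GL1}.) Finally $(iv)$ follows from $(ii)$: once each $\partial M(a|\ga)$ is $\mu$-null, any Voronoi partition agrees with $\set{M(a|\ga):a\in\ga}$ up to $\mu$-null sets, so the closed Voronoi regions themselves tile $\D R^d$ modulo a $\mu$-null set, and $(iii)$ may be restated with $M(a|\ga)$ in place of $W_a$.

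The part I expect to carry the real content is $(ii)$: for a measure with atoms --- as here --- the vanishing of $\mu(\partial M(a|\ga))$ is not automatic and genuinely needs the strict-decrease perturbation above; similarly $(i)$ leans on the strict monotonicity $V_{n-1,2}(\mu)>V_{n,2}(\mu)$, which I would invoke from the literature rather than reprove. Parts $(iii)$ and $(iv)$ are then straightforward consequences of the partition comparison inequality.
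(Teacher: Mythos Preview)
The paper does not give a proof of this proposition; it is recorded as a known fact with a citation to \cite{GG, GL1} and is used only as a tool in the subsequent computation of optimal sets for the discrete distribution $\nu$. There is therefore nothing in the paper to compare your argument against.

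That said, your sketch is a sound reconstruction of the classical argument one finds in the cited references: part $(i)$ via the strict monotonicity $V_{n-1,2}(\mu)>V_{n,2}(\mu)$ for measures with infinite support, part $(iii)$ via the bias--variance identity $\int_{W_a}\|x-b\|^2\,d\mu=\int_{W_a}\|x-c_a\|^2\,d\mu+\mu(W_a)\|b-c_a\|^2$ together with the partition comparison inequality, part $(ii)$ in dimension one via the atom-shifting perturbation at a midpoint (your observation that $\mu([m_{i-1},m_i))=0$ would force $a_i=m_i$ is exactly what rules out the degenerate case), and part $(iv)$ as an immediate consequence of $(ii)$. For the purposes of the paper only the one-dimensional, $r=2$ case is needed, and your argument covers that case completely; the general $\D R^d$ statement you defer to \cite{GL1}, which is precisely what the paper does as well.
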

	\begin{lemma} \label{lemma0002} The set $\set{\frac 25, \frac {8}{15}}$ forms an optimal set of two means for $\gn$, and the corresponding quantization error is given by $V_2(\gn)=\frac{1}{1575}=0.000634921$.
	\end{lemma}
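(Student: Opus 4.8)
The plan is to exploit the one-dimensional geometry of Voronoi partitions together with Proposition~\ref{prop1000} to show that any optimal pair necessarily splits the atoms into an initial block $\set{a_1,\dots,a_{k-1}}$ and a tail $A_k$, thereby reducing the problem to a one-parameter family of configurations indexed by $k\ge2$, and then to pick out $k=2$ by two elementary computations.

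First I would establish the upper bound by evaluating the error of the set $\set{\frac25,\frac8{15}}$ directly. Its midpoint is $\frac7{15}$, which lies strictly between $a_1=\frac25$ and $a_2=\frac12$; since $a_1<a_2<\cdots$ increase to $\frac35$, the nearest-point map sends $a_1$ to $\frac25$ and every $a_j$ with $j\ge2$ to $\frac8{15}$. Moreover $\frac8{15}=b_2$: using $E(\gn)=\frac7{15}$ from Lemma~\ref{lemma001} and $a_1\cdot2^{-1}=\tfrac15$, one has $b_2=\frac{\sum_{j\ge2}a_j2^{-j}}{\sum_{j\ge2}2^{-j}}=\frac{7/15-1/5}{1/2}=\tfrac8{15}$. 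A parallel-axis computation starting from the variance $V(\gn)=\frac8{1575}$ of Lemma~\ref{lemma001} then gives the error of this set as
\[
\sum_{j\ge2}(a_j-b_2)^2\,2^{-j}=V(\gn)-\tfrac12\,(a_1-E(\gn))^2-\tfrac12\,(b_2-E(\gn))^2=\tfrac8{1575}-\tfrac1{450}-\tfrac1{450}=\tfrac1{1575},
\]
so $V_2(\gn)\le\tfrac1{1575}$; since $V_1(\gn)=\tfrac8{1575}>\tfrac1{1575}$, any optimal set of two-means has two distinct elements.

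Next I would pin down the shape of an arbitrary optimal pair $\ga=\set{\ga_1<\ga_2}$. With $m=\frac12(\ga_1+\ga_2)$, the Voronoi cells are $(-\infty,m]$ and $[m,\infty)$. Because $a_1<a_2<\cdots\uparrow\frac35$, because no atom may lie on the boundary $\set m$ (Proposition~\ref{prop1000}(ii)), and because both cells carry positive $\gn$-mass (Proposition~\ref{prop1000}(i)), it follows that $m\in(a_{k-1},a_k)$ for a unique integer $k\ge2$; hence $\ga_1$'s cell meets $C$ in $\set{a_1,\dots,a_{k-1}}$ and $\ga_2$'s cell meets $C$ in $A_k$. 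By Proposition~\ref{prop1000}(iii), $\ga_1=E(X:X\in\set{a_1,\dots,a_{k-1}})$ and $\ga_2=b_k$, so $V_2(\gn)$ (the error of $\ga$) equals
\[
e_k:=\sum_{j=1}^{k-1}(a_j-\ga_1)^2\,2^{-j}+\sum_{j=k}^{\infty}(a_j-b_k)^2\,2^{-j}.
\]

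Finally I would rule out $k\ge3$ all at once. In that case $\set{a_1,\dots,a_{k-1}}$ contains both $a_1$ and $a_2$, so dropping the remaining nonnegative terms and then minimising over the centroid $\ga_1$,
\[
\sum_{j=1}^{k-1}(a_j-\ga_1)^2\,2^{-j}\ \ge\ (a_1-\ga_1)^2\,\tfrac12+(a_2-\ga_1)^2\,\tfrac14\ \ge\ \frac{\tfrac12\cdot\tfrac14}{\tfrac12+\tfrac14}\,(a_1-a_2)^2=\frac16\cdot\frac1{100}=\frac1{600},
\]
whence $e_k\ge\frac1{600}>\frac1{1575}\ge V_2(\gn)$, contradicting $V_2(\gn)=e_k$. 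Therefore $k=2$, and so $V_2(\gn)=e_2=\frac1{1575}=0.000634921$ with optimal set $\set{\frac25,\frac8{15}}$. The one genuinely delicate point is the structural reduction of the third paragraph — extracting ``initial block plus tail'' from parts (i)--(iii) of Proposition~\ref{prop1000} together with the monotone, convergent behaviour of the $a_j$; everything after that is the parallel-axis identity and the elementary two-point variance bound.
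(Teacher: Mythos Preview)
Your proof is correct. Both your argument and the paper's hinge on the same decisive inequality --- the two-point lower bound $\tfrac12(a_1-c)^2+\tfrac14(a_2-c)^2\ge\tfrac1{600}$, which exceeds the candidate error $\tfrac1{1575}$ --- to force $a_1$ alone into the left Voronoi cell. Your organisation is cleaner: you invoke Proposition~\ref{prop1000}(i)--(iii) together with the monotonicity of the $a_j$ to reduce at once to a one-parameter family indexed by the split point $k$, and you obtain the upper bound via the parallel-axis identity rather than by direct summation. The paper instead first pins down $c_1\le\tfrac25+\tfrac1{20}$ from the single-atom contribution at $a_1$, and only then minimises the two-point expression over that restricted interval; this preliminary localisation is in fact unnecessary (as your unconstrained minimisation shows the global minimum is already $\tfrac1{600}$), but apart from that the two arguments are the same in substance.
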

	﻿
	\begin{proof} Let $\gb:=\set{\frac 25, \frac {8}{15}}$. The distortion error due to the set $\gb$ is given by
		\[\sum_{j=1}^\infty \min_{a\in \gb}(a_j-a)^2 \frac 1{2^j}=\frac{1}{1575}=0.000634921.\]
		Since $V_2(\gn)$ is the quantization for two-means, we have $V_2(\gn)\leq 0.000634921$. Let $\ga:=\set{c_1, c_2}$ be an optimal set of two-means for $\gn$ such that $c_1<c_2$. Since the elements in an optimal set are the expected values in their own Voronoi regions, we have $\frac 25\leq c_1<c_2\leq \frac 35$. Suppose that $\frac 25+\frac 1{20}< c_1$. Then,
		\[V_2(\gn)\geq (a_1-c_1)^2f(a_1)=(\frac 25-c_1)^2 \frac 12\geq \frac 1{800}=0.00125>V_2(\gn),\]
		which is a contradiction, and so $c_1\leq (\frac 25+\frac 1{20})$. We now show that the Voronoi region of $c_1$ does not contain any point from $A_2$. For the sake of contradiction, assume that the Voronoi region of $c_1$ contains a point, say $a_2$, from $A_2$. Then, we have
		\[V_2(\gn)\geq \min\left\{(a_1-c_1)^2\frac 1 2 +(a_2-c_1)^2 \frac 1{2^2} : \frac 25\leq c_1\leq (\frac 25+\frac 1{20})\right\}=\frac{1}{600}=0.00166667,\]
		which occurs when $c_1=\frac{13}{30}$. But, then $V_2(\gn)\geq 0.00166667>V_2(\gn)$, which leads to a contradiction. Hence, the Voronoi region of $c_1$ does not contain any point from $A_2$. Again, by Proposition~\ref{prop1000}, $P(M(c_1|\ga))>0$, and so the Voronoi region of $c_2$ does not contain the point $a_1$. Hence, $c_1=\frac 25$ and $c_2=E(X : X\in A_2)=\frac {8}{15}$, and the corresponding quantization error is given by $V_2(\gn)=\frac{1}{1575}=0.000634921$. Thus, the lemma is yielded. \end{proof}
	The following proposition, for all $n\geq 2$, gives the optimal sets of $n$-means and the corresponding quantization errors for $\gn$.
	﻿
	\begin{prop} \label{prop0003}
		For $j\in \D N$, let $a_j$, $A_j$, and $b_j$ be defined as before. Let $n\geq 2$. Then, $\ga_n:=\ga_n(\gn)=\set{a_1, a_2, \cdots, a_{n-1}, b_n}$ is an optimal set of $n$-means for $\gn$ and the corresponding quantization error is given by $V_n(\gn)=\frac{2^{6-3n}}{1575}$.
	\end{prop}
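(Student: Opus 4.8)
The plan is to argue by induction on $n$, exploiting a self-similarity of $\gn$. Put $S(x):=\tfrac x2+\tfrac3{10}$. One checks directly that $S(a_j)=a_{j+1}$ for every $j\in\D N$ and $S([\tfrac25,\tfrac35])=[\tfrac12,\tfrac35]$; moreover $S$ carries the atoms of $A_n$ bijectively onto those of $A_{n+1}$ while preserving their conditional masses, so the conditional distribution of $\gn$ on $A_{n+1}$ is the push-forward under $S$ of the conditional distribution of $\gn$ on $A_n$. Two consequences will be used repeatedly: $\gn=\tfrac12\gd_{a_1}+\tfrac12\,\gn\circ S^{-1}$, and $b_{n+1}=S(b_n)$, so that $S(\ga_n)=\set{a_2,\dots,a_n,b_{n+1}}$ and $\ga_{n+1}=\set{a_1}\cup S(\ga_n)$. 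The base case $n=2$ is precisely Lemma~\ref{lemma0002}, since $\ga_2=\set{a_1,b_2}=\set{\tfrac25,\tfrac8{15}}$ and $V_2(\gn)=\tfrac1{1575}=\tfrac{2^{6-6}}{1575}$.

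Assume now that $\ga_n$ is optimal with $V_n(\gn)=\tfrac{2^{6-3n}}{1575}$; I first bound $V_{n+1}(\gn)$ from above by the distortion of $\ga_{n+1}$ with respect to $\gn$. For each $j\ge2$ the atom $a_1=\tfrac25$ is no closer to $a_j$ than $a_2=S(a_1)\in S(\ga_n)$ is, so the nearest point of $\ga_{n+1}=\set{a_1}\cup S(\ga_n)$ to $a_j$ lies in $S(\ga_n)$; since $a_1$ itself contributes nothing, the distortion of $\ga_{n+1}$ equals $\sum_{j\ge2}\min_{a\in S(\ga_n)}(a_j-a)^2\,2^{-j}$. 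Writing $a_j=S(a_{j-1})$ and $a=S(a')$ and using that $S$ contracts distances by $\tfrac12$, this equals $\tfrac18$ times the distortion of $\ga_n$ with respect to $\gn$, which by the inductive hypothesis is $V_n(\gn)$. Hence $V_{n+1}(\gn)\le\tfrac18 V_n(\gn)=\tfrac{2^{6-3(n+1)}}{1575}$.

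For the reverse inequality, let $\ga=\set{c_1<c_2<\cdots<c_{n+1}}$ be an optimal set of $(n+1)$-means. By Proposition~\ref{prop1000} each $c_i=E(X:X\in M(c_i|\ga))\in[\tfrac25,\tfrac35]$; in particular $a_1=\tfrac25\le c_1<c_i$ for $i\ge2$, so $a_1\in M(c_1|\ga)$. If $M(c_1|\ga)$ also contained the atom $a_2$ — and, the Voronoi cell being an interval, $a_2$ is the only further atom that must be excluded — then $V_{n+1}(\gn)\ge\min_{c}\big(\tfrac12(a_1-c)^2+\tfrac14(a_2-c)^2\big)=\tfrac1{600}$, contradicting the upper bound, which for $n+1\ge3$ is at most $\tfrac1{12600}$. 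Therefore $M(c_1|\ga)\cap C=\set{a_1}$, whence $c_1=a_1$, and for every $j\ge2$ the minimum of $(a_j-c_i)^2$ over $i$ is attained at some $i\ge2$. Setting $\gb:=\set{S^{-1}(c_2),\dots,S^{-1}(c_{n+1})}$, an $n$-point set, the same scaling identity gives $V_{n+1}(\gn)=\sum_{j\ge2}\min_{2\le i\le n+1}(a_j-c_i)^2\,2^{-j}=\tfrac18\sum_{i\ge1}\min_{b\in\gb}(a_i-b)^2\,2^{-i}\ge\tfrac18 V_n(\gn)$. Combined with the upper bound this forces $V_{n+1}(\gn)=\tfrac18 V_n(\gn)=\tfrac{2^{6-3(n+1)}}{1575}$; as $\ga_{n+1}$ realizes this value, it is an optimal set of $(n+1)$-means, and the induction is complete.

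The only genuinely delicate point is the claim that an optimal $(n+1)$-means set must place a codepoint exactly at $a_1$ and isolate it in its own Voronoi cell; the rest is bookkeeping around the affine map $S$. It is for this step that the explicit upper bound on $V_{n+1}(\gn)$ has to be secured first, since the isolation of $a_1$ is forced only by pitting the competing error $\tfrac1{600}$ against that bound — and this is also why the inductive step needs $n+1\ge3$, the case $n=2$ being furnished separately by Lemma~\ref{lemma0002}.
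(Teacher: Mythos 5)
Your proof is correct, but the inductive step runs along a genuinely different line from the paper's. The paper also inducts with Lemma~\ref{lemma0002} as base case, but in the step it pins down \emph{every} codepoint of an optimal $(k+1)$-set one by one: first forcing $c_1=a_1$ by the same $\tfrac{1}{600}$-versus-upper-bound comparison you use, and then, for the least $\ell$ with $c_\ell\neq a_\ell$, showing the cell of $c_\ell$ would contain $a_\ell$ and $a_{\ell+1}$ and hence incur error at least $\tfrac{8^{-\ell}}{75}$, which beats the explicit upper bound $\tfrac{2^{3-3k}}{1575}$ obtained by computing the distortion of $\set{a_1,\dots,a_k,b_{k+1}}$ directly; this yields $c_j=a_j$ for all $j\le k$ and then $c_{k+1}=b_{k+1}$. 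You instead exploit the exact self-similarity $\gn=\tfrac12\gd_{a_1}+\tfrac12\,\gn\circ S^{-1}$ with $S(x)=\tfrac{x}{2}+\tfrac{3}{10}$ (which indeed satisfies $S(a_j)=a_{j+1}$ and $b_{n+1}=S(b_n)$), so that after the single exclusion argument isolating $a_1$ you can rescale by $S^{-1}$ and obtain the clean recursion $V_{n+1}(\gn)=\tfrac18 V_n(\gn)$, with $\ga_{n+1}$ a witness. Your route buys a shorter, more conceptual induction (one contradiction instead of a level-$\ell$ family of minimizations, and no need for the closed-form distortion of the candidate set), at the mild cost of identifying only the value $V_{n+1}$ and one optimal set, whereas the paper's argument shows in passing that any optimal $(k+1)$-set must have all its first $k$ points equal to $a_1,\dots,a_k$. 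The delicate points you flag (isolation of $a_1$, the need for the upper bound first, and the convexity of the first Voronoi cell to reduce to excluding $a_2$) are all handled correctly.
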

	﻿
	\begin{proof} Let us prove the proposition by induction.
		By Lemma~\ref{lemma0002}, the proposition is true if $n=2$. Let the proposition be true for $n=k$ for some positive integer $k\geq 2$, i.e., $\set{a_1, a_2, \cdots, a_{k-1}, b_k}$ is an optimal set of $k$-means for $\gn$ and the corresponding quantization error is given by $V_k(\gn)=\frac{2^{6-3k}}{1575}$. Let $\set{c_1, c_2, \cdots, c_k, c_{k+1}}$ be an optimal set of $(k+1)$-means for $\gn$ such that $\frac 25\leq c_1<c_2<\cdots<c_k<c_{k+1}\leq \frac 35$.
		Consider the set of $(k+1)$ points $\gb:=\set{a_1, a_2, \cdots, a_{k}, b_{k+1}}$. The distortion error due to the set $\gb$ is given by
		\[\sum_{j=k+1}^\infty (a_j-b_{k+1})^2 f(a_j)=\frac{2^{3-3 k}}{1575}.\]
		Since $V_{k+1}(\gn)$ is the quantization error for $(k+1)$-means, we have $V_{k+1}(\gn)\leq \frac{2^{3-3 k}}{1575}$.
		We show that $c_j=a_j$ for $1\leq j\leq k$. If $c_1\neq a_1$, then the Voronoi region of $c_1$ will contain at least $a_1$ and $a_2$, and so, \begin{align*} V_{k+1}(\gn)&\geq  \min\left\{(a_1-c_1)^2\frac 1 2 +(a_2-c_1)^2 \frac 1{2^2} : \frac 25\leq c_1\leq \frac 3 5\right\}=\frac{1}{600}=0.00166667,
		\end{align*}
		implying $V_{k+1}(\gn)\geq 0.00166667>V_2(\gn)\geq V_k(\gn)>V_{k+1}(\gn)$, which is a contradiction. So, we can assume that $c_1=a_1$. Suppose that $\ell$, where $2\leq \ell\leq k$, is the least positive integer for which $c_\ell\neq a_\ell$. Then, $c_1=a_1, c_2=a_2, \cdots, c_{\ell-1}=a_{\ell-1}$, and the Voronoi region of $c_\ell$ must contain at least $a_\ell$ and $a_{\ell+1}$ yielding
		\begin{align*} V_{k+1}(\gn)&\geq  \min\left\{(a_\ell-c_\ell)^2 \frac 1{2^{\ell}}+(a_{\ell+1}-c_\ell)^2\frac 1{2^{\ell+1}} : a_\ell \leq c_{\ell}\leq \frac 35\right\}=\frac{8^{-\ell}}{75},
		\end{align*}
		which occurs when $c_\ell=\frac{3}{5}-\frac{2^{-\ell}}{3}$.
		Since $2\leq \ell\leq k$, we have
		\[\frac {2^{3-3k}}{1575}\leq \frac {2^{3-3\ell}}{1575}\leq \frac 8{21} \frac{8^{-\ell}}{75}<\frac{8^{-\ell}}{75}\]  implying
		$V_{k+1}(\gn)\geq \frac{8^{-\ell}}{75}>\frac {2^{3-3k}}{1575}\geq V_{k+1}(\gn)$, which yields a contradiction. Hence, we can assume that $c_1=a_1, c_2=a_2, \cdots, c_k=a_k$ yielding $c_{k+1}=b_{k+1}$ whenever it is true for $n=k$. Thus, the proposition is true for $n=k+1$. Hence, by the Principle of Mathematical Induction, the proposition is true for all positive integers $n\geq 2$.
	\end{proof}
	﻿
	﻿
	Let us now give the following theorem.
	
	﻿
	\begin{theorem} \label{theo2}  Let $\gn$ be the infinite discrete distribution. Then, the quantization dimension $D(\gn)$ of the measure $\gn$ exists and equals zero, but the quantization coefficient does not exist. 
	\end{theorem}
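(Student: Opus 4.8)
The plan is to extract both claims directly from the explicit description of the optimal quantization obtained in Proposition~\ref{prop0003}, so that no further optimization is required; throughout, $r=2$. Recall that $V_n(\gn)=V_{n,2}(\gn)=\frac{2^{6-3n}}{1575}=\frac{64}{1575}\,8^{-n}$ for every $n\ge 2$ (with $V_1(\gn)=\frac{8}{1575}$). Taking logarithms, $-\log V_{n,2}(\gn)=(3n-6)\log 2+\log 1575$, which is asymptotic to $3n\log 2$ as $n\to\infty$.

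First I would compute the quantization dimension from the definition in \eqref{eq4}. With $r=2$,
\[
\frac{r\log n}{-\log V_{n,r}(\gn)}=\frac{2\log n}{(3n-6)\log 2+\log 1575}\longrightarrow 0\qquad(n\to\infty),
\]
since the numerator grows only logarithmically while the denominator grows linearly in $n$. Hence $\underline D_2(\gn)=\overline D_2(\gn)=0$, so the quantization dimension $D(\gn)=D_2(\gn)$ exists and equals $0$.

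Next, for the quantization coefficient I would show that no exponent $\gk$ produces a finite, positive value. For any $\gk>0$,
\[
n^{\frac{r}{\gk}}V_{n,r}(\gn)=\frac{64}{1575}\,n^{\frac{2}{\gk}}\,8^{-n}=\frac{64}{1575}\exp\!\Big(\tfrac{2}{\gk}\log n-n\log 8\Big)\longrightarrow 0,
\]
because exponential decay dominates polynomial growth; thus for every $\gk>0$ the lower and upper $\gk$-dimensional quantization coefficients of order $2$ are both $0$. For the dimension value $\gk=D(\gn)=0$ the exponent $r/\gk$ is not defined, so the $D(\gn)$-dimensional quantization coefficient is not defined. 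Combining the two cases, there is no $\gk\in(0,+\infty)$ for which the $\gk$-dimensional quantization coefficient is finite and positive; this is the precise sense in which the quantization coefficient does not exist.

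I do not anticipate a genuine obstacle: once Proposition~\ref{prop0003} is in hand, both assertions follow from elementary asymptotics. The one point that deserves care is the formulation of the second statement. Since $V_{n,2}(\gn)$ decays geometrically, one does have the genuine positive limit $8^{n}V_{n,2}(\gn)\to \frac{64}{1575}$; but the scaling factor $8^{n}$ is not of the admissible form $n^{r/\gk}$, so it is \emph{not} a quantization coefficient in the sense of the definition in the introduction. I would therefore make explicit that ``the quantization coefficient does not exist'' refers to the non-existence of a finite positive $\gk$-dimensional coefficient, equivalently to the ill-definedness of the $D(\gn)$-dimensional coefficient when $D(\gn)=0$.
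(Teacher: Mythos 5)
Your proposal is correct and follows essentially the same route as the paper: both read off $V_n(\gn)=\frac{2^{6-3n}}{1575}$ from Proposition~\ref{prop0003}, compute the limit in the definition to get $D(\gn)=0$, and then conclude the quantization coefficient does not exist because the dimension is zero. Your extra observation that $n^{2/\gk}V_{n,2}(\gn)\to 0$ for every $\gk>0$ is a welcome clarification of what ``does not exist'' means, but it is elaboration rather than a different argument.
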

	﻿
	\begin{proof}
		By Proposition~\ref{prop0003}, we have $V_n(\gn)=\frac{2^{6-3n}}{1575}$, and so,
		$D(\gn)=\mathop{\lim}\limits_{n\to \infty} \frac {2\log n}{-\log V_n(\gn)}=0$, i.e., $D(\gn)$ exists and equals zero. Since $D(\gn)=0$, the quantization coefficient for $\gn$ does not exist. Thus, the proof of the theorem is complete. 
	\end{proof}
	
	Let us now give the following example. 	﻿
	\begin{example}{\label{exam23}}
		Let $\nu$ be an infinite discrete distribution with support $C := \{a_j : j \in \mathbb{N}\}$, where $a_j := 4 \cdot 3^{j-1}$. The probability mass function $f$ of $\nu$ is given by:
		\[
		f(x) = \begin{cases}
			\frac{k \left( 2(j-1) \log(j-1) + (j-2) \right)}{3^j (j-1)(j-2)^3 \log^2(j-1)} & \text{if } x = a_j \text{ for } j \geq 3, \\
			0 & \text{otherwise},
		\end{cases}
		\]
		where $k = \left( \sum_{j=2}^{\infty} \frac{(2(j-1) \log (j-1) + (j-2))}{3^{j} (j-1)(j-2)^3 \log^2 (j-1)} \right)^{-1} < \infty$ is the normalizing constant. For each $a_j$, we have $0 \leq f(a_j) \leq 1$ and $\sum_{j=3}^{\infty} f(a_j) = 1$. These conditions ensure that $f$ is a valid probability mass function.
		Let $A \subseteq \mathbb{R}$ with $\te{card}(A) = n$, and define the index set
		\[
		I = \{ j \geq 2 : A \cap [3^j, 3^{j+1}) = \emptyset \}.
		\]
		Then, for $j \in I$, we have
		\[
		\begin{aligned}
			\min_{a \in A} |x_j - a|^2 & \geq (x_j - 3^j)^2  = (4 \cdot 3^{j-1} - 3^j)^2  = 3^{2(j-1)}.
		\end{aligned}
		\]
		We see that 
		\[
		\begin{aligned}
			&\sum_{j=2}^{\infty} \min_{a \in A} |a_j - a|^2 \cdot \frac{k (2(j-1) \log(j-1) + (j-2))}{3^{2j} (j-1)(j-2)^3 \log^2(j-1)} \\&\geq \sum_{j \in I} 3^{2(j-1)} \cdot \frac{k (2(j-1) \log(j-1) + (j-2))}{3^{2j} (j-1)(j-2)^3 \log^2(j-1)}  \\
			&=  \frac{k}{9} \sum_{j=n+2}^{\infty} \frac{(2(j-1) \log(j-1) + (j-2))}{(j-1)(j-2)^3 \log^2(j-1)}\\
			& \geq \frac{k}{9} \int_{j=n+2}^{\infty} \frac{(2(x-1) \log(x-1) + (x-2))}{(x-1)(x-2)^3 \log^2(x-1)} \\
			& \geq \frac{k}{9 n^2 \log(n+2)}.
		\end{aligned}
		\]
		The above inequality is true for any $A\ci \D R$ with $\te{card}(A)=n$, and hence $V_{n, 2}(\gn)\geq \frac{k}{3^2 n^2 \log(n+2)}$. 
		Furthermore, for $B = \{a_2, \ldots, a_{n+1}\}$, we observe:
		\[
		\min_{b \in B} |a_j - b|^2 = 0 \te{ if }  j \leq n+1,
		\te{ and }
		\min_{b \in B} |a_j - b|^2 \leq a_j^2 = \frac{16 \cdot 3^{2j}}{9} \te{ if } j \geq n+2.
		\]
		Consequently,
		\[
		\begin{aligned}
			\sum_{j=2}^{\infty} \min_{b \in B} |a_j - b|^2 \cdot \frac{k (2(j-1) \log(j-1) + (j-2))}{3^{2j} (j-1)(j-2)^3 \log^2(j-1)} &\leq \frac{16k}{9} \sum_{j=n+2}^{\infty} \frac{(2(j-1) \log(j-1) + (j-2))}{(j-1)(j-2)^3 \log^2(j-1)} \\
			& \leq \frac{16k}{9 n^2 \log(n+1)},
		\end{aligned}
		\]
		implying $V_{n, 2}(\gn)\leq \frac{16k}{9 n^2 \log(n+1)}$. Therefore, we conclude:
		\[
		\begin{aligned}
			&-\log \frac{16 k}{9 n^2 \log(n+1)}  \leq - \log V_{n,2}(\nu) \leq -\log \frac{16 k}{9 n^2 \log(n+2)} \\
			& \implies \frac{1}{-\log \frac{16 k}{9 n^2 \log(n+2)}} \leq \frac{1}{- \log V_{n,2}(\nu)} \leq \frac{1}{-\log \frac{16 k}{9 n^2 \log(n+1)}} \\
			& \implies \lim_{n\to \infty} \frac{2 \log n}{-\log \frac{16 k}{9 n^2 \log(n+2)}} \leq \lim_{n \to \infty} \frac{2 \log n}{- \log V_{n,2}(\nu)} \leq \lim_{n \to \infty} \frac{1}{-\log \frac{16 k}{9 n^2 \log(n+1)}} \\
			& \implies 1\leq \lim_{n \to \infty} \frac{2 \log n}{- \log V_{n,2}(\nu)}\leq 1. 
		\end{aligned}
		\]
		This yields the fact that the quantization dimension of the infinite discrete distribution $\nu$ is $1$, which is nonzero. Furthermore,
		\[
		0 = \lim_{n \to \infty} \frac{k}{9 \log(n+2)} \leq \liminf_{n \to \infty} n^2 V_{n,2}(\nu) \leq \limsup_{n \to \infty} n^2 V_{n,2}(\nu) \leq \lim_{n\to \infty} \frac{16k}{9 \log(n+1)} = 0,
		\]
		which implies that the one-dimension quantization coefficient for $\gn$ is zero. 
	\end{example}
	\begin{remark}
		By Remark~\ref{rem24}, it is known that if the $k$-dimensional quantization coefficient is finite and positive, then $k$ equals the quantization dimension of $\nu.$ By Example \ref{exam23}, we see that the converse is not true. One can also see \cite[Example 6.4]{GL3} in which Graf-Luschgy considered an infinite discrete distribution for which quantization dimension is one, but the quantization coefficient is infinity. 
	\end{remark}
	\section*{Statements and Declarations}
	\textbf{Data availability:} Data sharing is not applicable to this article as no data sets were generated or analyzed during the current study.\\
	\textbf{Funding:} The first author thanks IIIT Allahabad (Ministry of Education, India) for financial support through a Senior Research Fellowship. \\
	\textbf{Conflict of interest:} 
	We declare that we do not have any conflict of interest.\\
	\textbf{Author Contributions:} 
	All authors contributed equally to this manuscript.
	

\begin{thebibliography}{9999}
		
		\bibitem[AW]{AW} E.F. Abaya and G.L. Wise, \emph{Some remarks on the existence of optimal quantizers}, Statistics \& Probability Letters, Volume 2, Issue 6, December 1984, pp. 349-351.
		\bibitem[B]{B} M.F. Barnsley, \emph{Fractals everywhere}, Academic Press, New York, London, 1988.
		
		\bibitem[BW]{BW} J.A. Bucklew and G.L. Wise, \emph{Multidimensional asymptotic quantization theory with $r$th power distortion measures}, IEEE Transactions on Information Theory, 1982, Vol. 28 Issue 2, 239-247.
		\bibitem[CR]{CR} D. \c C\"omez and M.K. Roychowdhury, \emph{Quantization of probability distributions on R-triangles}, arXiv:1605.09701 [math.DS].
		
		\bibitem[DL]{DL} M.Dai, and Z. Liu. \emph{The quantization dimension and other dimensions of probability measures}, International Journal of Nonlinear Science 5, no. 3 (2008): 267-274.
		\bibitem[DR]{DR} C. P. Dettmann and M. K. Roychowdhury, \emph{Quantization for uniform distributions on equilateral triangles},  Real Analysis Exchange, Vol. 42(1) (2017) 149-166.
		
		\bibitem[F]{F} K.J. Facloner, \emph{Techniques in Fractal Geometry}, John Wiley \& Sons Ltd., Chichester, 1997.
		
		\bibitem[F1]{F1} K.J. Facloner, \emph{The multifractal spectrum of statistically self-similar measures}, J. Theoret. Probab. 7 (1994), 681-701.
		
		\bibitem[GG]{GG} A. Gersho and R.M. Gray, \emph{Vector quantization and signal compression}, Kluwer Academy publishers: Boston, 1992.
		
		\bibitem[GKL]{GKL}  R.M. Gray, J.C. Kieffer and Y. Linde, \emph{Locally optimal block quantizer design}, Information and Control, 45 (1980),  178-198.
		
		
		\bibitem[GL1]{GL1} A. Gy\"orgy and T. Linder, \emph{On the structure of optimal entropy-constrained scalar quantizers},  IEEE transactions on information theory, vol. 48, no. 2, February 2002.
		
		\bibitem[GL2]{GL2} S. Graf and H. Luschgy, \emph{Foundations of quantization for probability distributions}, Lecture Notes in Mathematics 1730, Springer, Berlin, 2000.
		
		\bibitem[GL3]{GL3} S. Graf and H. Luschgy, \emph{The Quantization Dimension of Self-Similar Probabilities}, Math. Nachr., 241 (2002), 103-109.
		
		
		
		
		\bibitem[GN]{GN}  R. Gray and D. Neuhoff, \emph{Quantization,} IEEE Trans. Inform. Theory,  44 (1998),  2325-2383.
		
		\bibitem[HJKPS]{HJKPS} T.C. Halsey, Mogens H. Jensen, L.P. Kadanoff, I. Procaccia, and B.I. Shraiman, \emph{Fractal measures and their
			singularities: the characterization of strange sets}, Phys. Rev. A 33 (1986), 1141-1151; Erratum, ibid. 34 (1986), 1601.
		
		
		%
		%
		
		%
		
		%
		
		%
		
		%
		
		
		
		%
		
		%
		
		
		
		\bibitem[L1]{L1} A. Lasota, \emph{A variational principle for fractal dimensions}, Nonlinear Analysis, Volume 64, Issue 3, 618-628 (2006).
		
		\bibitem[L2]{L2} P. Liszka, \emph{The $L^q$ spectra and R\'enyi dimension of generalized inhomogeneous self-similar measures}, Cent. Eur. J. Math., 12(9), 1305-1319 (2014).
		
		\bibitem[LM]{LM} L.J. Lindsay, R.D. Mauldin, \emph{Quantization dimension for conformal iterated function systems}, Nonlinearity 15 (2002) 189-199.
		
		
		
		
		\bibitem[OS1] {OS1} L. Olsen and N. Snigireva, \emph{Multifractal spectra of in-homogenous self-similar measures}, Indiana U. Math. J. 57 (2008), 1789-1844.
		
		\bibitem[OS2] {OS2} L. Olsen and N. Snigireva, \emph{$L^q$ spectra and R\'enyi dimensions of in-homogeneous self-similar measures}, Nonlinearity,  20(1), 151-175 (2007).
		\bibitem[pat]{P1} N. Patzschke, \emph{Self-conformal multifractal measures}. Advances in Applied Mathematics 19 (4) (1997) 486-513.
		\bibitem[P] {P} K. P\"otzelberger, \emph{The quantization dimension of distributions}, Math. Proc. Camb. Phil. Soc. (2001), 131, 507-519.
		\bibitem[PRV]{PRV} A. Priyadarshi, M. K. Roychowdhury, and M. Verma, \emph{Quantization dimensions for inhomogeneous bi-Lipschitz iterated function systems}, Monatshefte f\"ur Mathematik, https://doi.org/10.1007/s00605-025-02063-4.  
		\bibitem[RR]{RR} J. Rosenblatt and M.K. Roychowdhury, \emph{Optimal quantization for piecewise uniform distributions}, to appear, Uniform Distribution Theory.
		
		
		\bibitem[R1]{R1} M.K. Roychowdhury, \emph{Optimal quantizers for some absolutely continuous probability measures}, Real Analysis Exchange, Vol. 43(1), 2017, pp. 105-136.
		
		\bibitem[R2]{R2} M.K. Roychowdhury, \emph{Quantization and centroidal Voronoi tessellations for probability measures on dyadic Cantor sets}, Journal of Fractal Geometry, 4 (2017), 127-146.
		
		
		\bibitem[R3]{R3} M.K. Roychowdhury, \emph{Least upper bound of the exact  formula for optimal quantization of some uniform Cantor distributions}, to appear, Discrete and Continuous Dynamical Systems - Series A.
		
		\bibitem[R4]{R4} M.K. Roychowdhury, \emph{Quantization dimension estimate of inhomogeneous self-similar measures}, Bull. Polish Acad. Sci. Math. 61 (2013), 35-45.
		\bibitem[R5]{R5} M.K. Roychowdhury, \emph{Quantization dimension estimate for condensation systems of conformal mappings, Real Analysis Exchange}, 2013, Vol. 38, Issue 2, p 317-335.
		
		\bibitem[R6]{R6} M.K. Roychowdhury, \emph{Quantization dimension function and ergodic measure with bounded distortion}, Bulletin of the Polish Academy of Sciences, Mathematics 57, no. 3-4 (2009), 251-262.
		﻿
		%
		%
		
		
		
		
		%
		\bibitem[Z]{Z} R. Zam, \emph{Lattice Coding for Signals and Networks: A Structured Coding Approach to Quantization, Modulation, and Multiuser Information Theory}, Cambridge University Press, 2014.
		
		\bibitem[Z1]{Z1} S. Zhu, \emph{Quantization dimension for condensation systems}, Mathematische Zeitschrift, 259 (2008), 33-43.
		\bibitem[Z2]{Z2} S. Zhu, \emph{Asymptotic quantization errors for in-homogeneous self-similar measures supported on self-similar sets}, Science China Mathematics, Volume 59, Issue 2, pp 337-350 (2016).
		\bibitem[Z3]{Z3} S. Zhu, \emph{The quantization for in-homogeneous self-similar measures with in-homogeneous open set condition}, International Journal of Mathematics 26, no. 05 (2015): 1550030.
		\bibitem[Z4]{Z4} S. Zhu, \emph{On the upper and lower quantization coefficient for probability measures on multiscale Moran sets}, Chaos, Solitons \& Fractals 45, no. 11 (2012): 1437-1443.
		
		
		
		
		
		
	\end{thebibliography}
\end{document}